\tikzstyle arrowstyle=[scale=1]
\tikzstyle directed=[postaction={decorate,decoration={markings,
    mark=at position .65 with {\arrow[arrowstyle]{stealth}}}}]
\tikzstyle reverse directed=[postaction={decorate,decoration={markings,
    mark=at position .65 with {\arrowreversed[arrowstyle]{stealth};}}}]
\newcommand{\dist}{\operatorname{dist}}
\newcommand{\pa}{\partial}
\newcommand{\eps}{\varepsilon}
\newcommand{\tr}{{\rm tr}}
\newcommand{\cC}{\mathcal{C}}
\newcommand{\cH}{\mathcal{H}}
\newcommand{\cN}{{\mathcal{N}}}
\newcommand{\N}{\mathbb{N}}
\newcommand{\R}{\mathbb{R}}
\newcommand{\cL}{\mathcal{L}}
\renewcommand{\epsilon}{\varepsilon}
        \newcommand{\erfc}{\operatorname{erfc} }
        \newcommand{\erf}{\operatorname{erf} }
\newcommand{\sff}{\mathbb{I}}
\begin{document}
\title*{How to hear the corners of a drum} 

\author{Medet Nursultanov, Julie Rowlett, and David Sher}

\institute{Medet Nursultanov \at Mathematical Sciences, Chalmers University of Technology and University of Gothenburg, 412 96 Gothenburg, Sweden, Institute of mathematics and mathematical modeling, Astana, Kazakhstan \email{medet@chalmers.se}
\and Julie Rowlett \at Mathematical Sciences, Chalmers University of Technology, 412 96 Gothenburg, Sweden, \email{julie.rowlett@chalmers.se}
\and David Sher \at Department of Mathematical Sciences, DePaul University, 2320 N Kenmore Ave, Chicago, IL 60614, USA \email{dsher@depaul.edu} 
}
\maketitle

\abstract{We announce a new result which shows that under either Dirichlet, Neumann, or Robin boundary conditions, the corners in a planar domain are a spectral invariant of the Laplacian.  For the case of polygonal domains, we show how a locality principle, in the spirit of Kac's ``principle of not feeling the boundary'' can be used together with calculations of explicit model heat kernels to prove the result.  In the process, we prove this locality principle for all three boundary conditions.  Albeit previously known for Dirichlet boundary conditions, this appears to be new for Robin and Neumann boundary conditions, in the generality presented here. For the case of curvilinear polygons, we describe how the same arguments using the locality principle fail, but can nonetheless be replaced by powerful microlocal analysis methods.}

\section{Introduction}
\label{intro}

\begin{figure}[b]

\includegraphics[scale=.65]{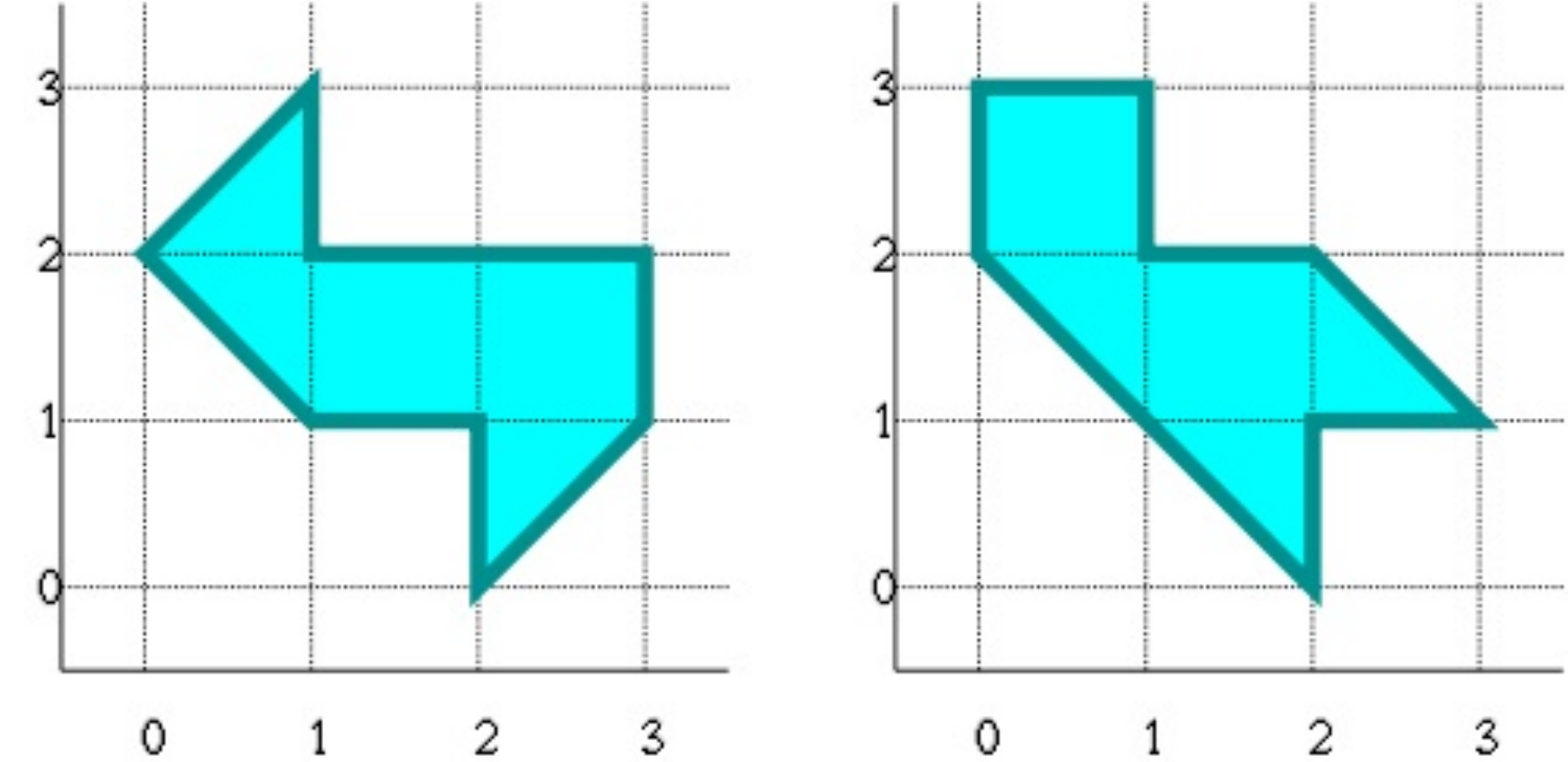}

\caption{These two domains were demonstrated by Gordon, Webb, and Wolpert to be isospectral for the Laplacian with Dirichlet boundary condition \cite{gww2}.  This image is from Wikipedia Commons.}  
\label{fig1}      
\end{figure}

It is well known that ``one cannot hear the shape of a drum'' \cite{gww1}.  Mathematically, this means that there exist bounded planar domains which have the same eigenvalues for the Laplacian with Dirichlet boundary condition, in spite of the domains having different shapes.  The standard example is shown in Figure \ref{fig1}.   Two geometric characteristics of these domains are immediately apparent:  
\begin{enumerate}
\item These domains both have corners. 
\item Neither of these domains are convex.
\end{enumerate}
This naturally leads to the following two open problems:
\begin{problem}  \label{prob1}
Can one hear the shape of a smoothly bounded drum?
\end{problem}
\begin{problem}  \label{prob2} 
Can one hear the shape of a convex drum?
\end{problem} 
The mathematical formulation of these problems are:  if two smoothly bounded (respectively, convex) domains in the plane are isospectral for the Laplacian with Dirichlet boundary condition, then are they the same shape?  

One could dare to conjecture that the answer to Problem \ref{prob1} is yes, based on the isospectrality result of Zelditch \cite{zelda}.  He proved that under certain geometric and symmetry conditions, if two analytically bounded domains are isospectral, then they have the same shape.  For certain classes of convex polygonal domains including triangles \cite{durso}, \cite{gm}; parallelograms \cite{sos}; and trapezoids \cite{hlr};  if two such domains are isospectral, then they are indeed the same shape.  This could lead one to suppose that perhaps Problem \ref{prob2} also has a positive answer.  

Contemplating these questions lead the second author and Z. Lu to investigate whether smoothly bounded domains can be isospectral to domains with corners.  In \cite{corners}, they proved that for the Dirichlet boundary condition, ``one can hear the corners of a drum'' in the sense that a domain with corners cannot be isospectral to a smoothly bounded domain.  In forthcoming work, the authors of the present paper shall generalize that result to both Neumann and Robin boundary conditions.  The first purpose of this paper is to announce those results and to describe the key ideas of the proofs.  

The second purpose of this paper is to provide a proof of the locality principle for the Neumann and Robin boundary conditions in a general context which includes domains with only piecewise smooth boundary. This is a generalization of Kac's ``principle of not feeling the boundary'' \cite{kac}.  To explain this principle, let $\Omega$ be a bounded domain in $\R^2$, or more generally $\R^n$, because the argument works in the same way in all dimensions.  Assume the Dirichlet boundary condition, and let the corresponding heat kernel for $\Omega$ be denoted by $H$, while the heat kernel for $\R^n$, 
\begin{equation} \label{hkrn} K(z, z', t) = (4\pi t)^{-n/2} e^{-d(z, z')^2/4t}. \end{equation} 
Let 
$$\delta = \min \{ d(z, \pa \Omega), d(z', \pa \Omega) \}.$$
Then, there are constants $A, B > 0$ such that 
$$| K(z,z',t) - H(z,z',t)| \leq A t^{-n/2} e^{-B \delta^2 / t}.$$
This means that the heat kernel for $\Omega$ is $O(t^\infty)$\footnote{By $O(t^\infty)$, we mean $O(t^N)$ for any $N \in \N$. } close to the Euclidean heat kernel, as long as we consider points $z, z'$ which are at a positive distance from the boundary; hence the heat kernel does not ``feel the boundary.''   

In a similar spirit, a more general locality principle is known to be true.  The idea is that one has a collection of sets which are ``exact geometric matches'' to certain pieces of the domain $\Omega$.  To describe the meaning of an ``exact geometric match,''  consider a piece of the first quadrant near the origin in $\R^2$.  A sufficiently small piece is an exact match for a piece of a rectangle near a corner.  Similarly, for a surface with exact conical singularities, near a singularity of opening angle $\gamma$, a piece of an infinite cone with the same opening angle is an exact geometric match to a piece of the surface near that singularity.  For a planar example, see Figure \ref{fig2}.   The locality principle states that if one takes the heat kernels for those ``exact geometric matches,'' and restricts them to the corresponding pieces of the domain (or manifold), $\Omega$, then those ``model heat kernels'' are equal to the heat kernel for $\Omega$, restricted to the corresponding pieces of $\Omega$, with error $O(t^\infty)$ as $t \downarrow 0$.  

This locality principle is incredibly useful, because if one has exact geometric matches for which one can explicitly compute the heat kernel, then one can use these to compute the short time asymptotic expansion of the heat trace.  Moreover, in addition to being able to compute the heat trace expansion, one can also use this locality principle to compute the zeta regularized determinant of the Laplacian as in \cite{polyakov}.  

Here, we shall explain one application of the locality principle:  ``how to hear the corners of a drum.''  
\begin{theorem} \label{thiso} Let $\Omega \subset \R^2$ be a simply connected, possibly curvilinear, polygonal domain.  Then, the Laplacian with either Dirichlet, Neumann, or Robin boundary condition is not isospectral to the Laplacian with the same boundary condition\footnote{In particular, in the case of Robin boundary conditions, we assume the same Robin parameters for both domains.}   on any smoothly bounded domain.   
\end{theorem}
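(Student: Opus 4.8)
The plan is to exploit the heat trace asymptotic expansion and show that the presence of corners produces a term which cannot appear for a smoothly bounded domain. Write $Z_\Omega(t) = \sum_j e^{-\lambda_j t}$ for the heat trace, which is a spectral invariant. The strategy is to establish, for each of the three boundary conditions, an expansion of the form
\[
Z_\Omega(t) = \frac{|\Omega|}{4\pi t} \mp \frac{|\pa\Omega|}{8\sqrt{\pi t}} + \sum_{\text{corners } \alpha} c(\alpha) + o(1), \qquad t \downarrow 0,
\]
where the corner contribution $c(\alpha)$ depends only on the interior opening angle $\alpha$ (and on the boundary condition, and in the Robin case on the parameters), and where the sign of the boundary term is $-$ for Dirichlet and $+$ for Neumann/Robin. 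For a smoothly bounded domain, the analogous expansion has constant term equal to a universal multiple of the Euler characteristic (via the integral of geodesic curvature), with no corner contributions. The heart of the argument is therefore: (i) compute $c(\alpha)$ from an explicit model and show that the total constant term of a curvilinear polygon differs from that of any smooth domain; and (ii) justify that the corner contributions are genuinely present, i.e. that one may localize the heat trace near the corners.

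First I would use the locality principle (proved earlier in the paper for all three boundary conditions, including piecewise-smooth boundary) to write the heat trace as a sum of model contributions: away from the corners, the boundary looks locally like a smooth boundary or a straight edge, and contributes only the area and perimeter terms plus a curvature term integrated over the smooth part of $\pa\Omega$ with $O(t^\infty)$ error; near each corner of opening angle $\alpha$, the domain is an exact geometric match (in the straight-sided case) or an $O(t^\infty)$-good approximate match (in the curvilinear case, where one invokes the microlocal replacement for the locality principle as indicated in the abstract) to an infinite wedge $W_\alpha$ of angle $\alpha$. Then I would insert the known explicit heat kernel on the infinite wedge $W_\alpha$ with the relevant boundary condition. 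For Dirichlet (and Neumann) the wedge heat kernel is classical; the diagonal integral $\int_{W_\alpha \cap B_R}\big(H_{W_\alpha}(z,z,t) - K(z,z,t)\big)\,dz$ converges as $R\to\infty$ and $t\to 0$ to a constant depending only on $\alpha$ — the standard value is $\frac{\pi^2 - \alpha^2}{24\pi\alpha}$ for Dirichlet, and for Robin one extracts the analogous leading corner constant, which limits to the Neumann value $\frac{\pi^2-\alpha^2}{24\pi\alpha}$ as the Robin parameter tends to $0$. The key structural point is that each $c(\alpha)$, as a function of $\alpha \in (0,2\pi)$, is \emph{not} affine in $\alpha$; consequently $\sum_\alpha c(\alpha)$ cannot be rewritten as (universal constant)$\times\big(2\pi - \sum_\alpha(\pi-\alpha)\big) = $ (universal constant)$\times 2\pi\chi(\Omega)$, the value forced for a smooth domain by Gauss--Bonnet. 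Hence the constant terms of the two heat trace expansions disagree, so the domains cannot be isospectral.

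The main obstacle, and where I would spend most of the effort, is Step (ii) in the curvilinear case: the locality principle as stated gives an exact match only for straight-sided polygons, since a curved edge near a vertex is not literally isometric to a wedge. For curvilinear polygons one must show that the corner contribution to the constant term still depends only on the opening angle $\alpha$ and not on the curvature of the incident arcs — intuitively because curvature effects are lower order near the vertex, but making this rigorous is exactly the point at which the abstract says the elementary argument "fails, but can nonetheless be replaced by powerful microlocal analysis methods." So in the curvilinear case I would replace the naive localization by a parametrix construction near the corner (building an approximate heat kernel on the curvilinear sector out of the exact wedge kernel plus curvature corrections) and control the remainder uniformly in $t$; the payoff is that the curvature corrections contribute to the $O(\sqrt t)$ and integrated-curvature terms but leave the angle-dependent constant $c(\alpha)$ untouched. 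A secondary technical point is the Robin case: one must check that the $t\downarrow 0$ limit of the wedge-minus-plane diagonal integral exists and is finite for a fixed Robin parameter, and isolate its angle dependence; here the relevant model computation (Robin heat kernel on a wedge) is the new ingredient, and one must confirm the resulting $c(\alpha)$ is again non-affine in $\alpha$ so the same contradiction with Gauss--Bonnet goes through.
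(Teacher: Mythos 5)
Your route is the same as the paper's: localize the heat trace using the locality principle (exact wedge matches along straight edges and at corners, replaced by a microlocal parametrix construction in the curvilinear case), insert the explicit model heat kernels, and compare the constant terms of the resulting small-$t$ expansions. The architecture and the model computations you invoke are exactly those of the paper.

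The one step that does not hold up as written is the final inference: ``$c(\theta)$ is not affine in $\theta$, hence $\sum_k c(\theta_k)$ cannot equal the Gauss--Bonnet value.'' Non-affineness only rules out the identity holding for \emph{all} polygons at once; the theorem requires the constant terms to disagree for \emph{each individual} polygon, and with several corners one could a priori have cancellation: if the per-corner excess $e(\theta):=c(\theta)-\tfrac{\pi-\theta}{12\pi}$ changed sign, a particular choice of angles could give $\sum_k e(\theta_k)=0$ even though $e\not\equiv 0$. What actually closes the argument is the explicit formula
\[
e(\theta)=\frac{\pi^2-\theta^2}{24\pi\theta}-\frac{\pi-\theta}{12\pi}=\frac{(\pi-\theta)^2}{24\pi\theta},
\]
the same for all three boundary conditions, which is strictly positive for every $\theta\neq\pi$; hence the polygon's constant term strictly exceeds $\chi(\Omega)/6=1/6$, while the smooth domain's is $\chi(\tilde\Omega)/6\le 1/6$ (the theorem does not assume $\tilde\Omega$ is simply connected, so this inequality is needed). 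This positivity is the content of the ``exercise in multivariable calculus'' the paper delegates to the earlier Dirichlet work. A smaller bookkeeping point: in the Robin case the smooth part of the boundary contributes an extra constant $-|\pa\Omega|\alpha/(2\pi\beta)$ (with $\alpha,\beta$ the Robin parameters), which is not a corner term and is missing from your displayed expansion; since isospectrality forces equal perimeters via the $t^{-1/2}$ coefficient, it cancels in the comparison, but it must be included before the constants are compared.
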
  

We shall not give a complete proof of this result, but instead, we shall explain the main ideas and ingredients.  In the case of classical polygonal domains whose boundary consists of straight line segments, one can use the locality principle together with the heat kernels for the ``exact geometric matches'' to prove Theorem \ref{thiso}.  This locality principle is stated and proven in \S \ref{sloc}.  The model heat kernels as well as the corresponding Green's functions for the ``exact geometric matches'' used in the proof of Theorem \ref{thiso} are given in \S \ref{hearing}.  We outline how to use these model heat kernels together with the locality principle to prove Theorem \ref{thiso} for the case of classical (straight-edged) polygonal domains in \S \ref{hearing}.  In conclusion, we explain in how the locality principle \em fails \em to prove Theorem \ref{thiso} for the case of curvilinear polygonal domains.  This motivates the discussion in \S 4 concerning the necessity and utility of microlocal analysis, in particular, the construction of the heat kernel for curvilinear polygonal domains via the heat space and heat calculus in that setting.  This construction, together with the proof of Theorem \ref{thiso} in full detail, is currently in preparation, and shall be presented in forthcoming work.  

\section{The locality principle}  \label{sloc}

We begin by setting notations and sign conventions and recalling fundamental concepts.  

\subsection{Geometric and analytic preliminaries} 
\label{prelim} 
To state the locality principle, we make the notion of an ``exact geometric match'' precise.  Let $\Omega$ be a domain, possibly infinite, contained in $\R^n$.   

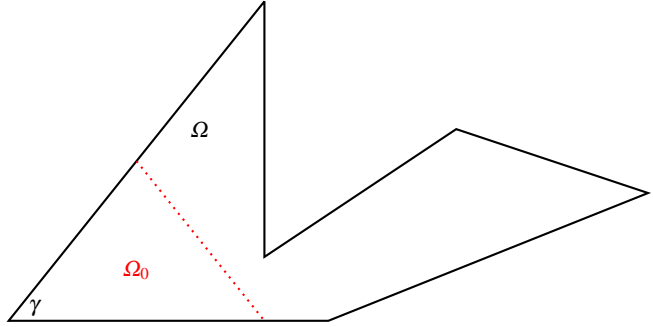
\begin{figure}[H] 
\begin{minipage}[b][4.75cm][s]{.45\textwidth}
\begin{tikzpicture}[scale=.85]

\draw[thick] (4,5)--(0,0)--(5,0)--(10,2)--(7,3)--(4,1)--(4,5);

 \node at (.40, .20) [align=center]{$\gamma$};
 
  \node at (3, 3) [align=center]{$\Omega$};

\draw[red, thick, dotted] (2,2.5)--(4,0);

 \node[red] at (2, 0.8) [align=center]{$\Omega_0$};

 \end{tikzpicture} 
\end{minipage} 
\caption{Above, we have the polygonal domain $\Omega$ which contains the triangular domain, $\Omega_0$.  Letting $S=S_\gamma$ be a circular sector of opening angle $\gamma$ and infinite radius, this is an example of an ``exact geometric match,'' in the sense that $\Omega_0$ is equal to a piece of $S$. } \label{fig2} 
\end{figure}

\begin{definition} \label{exactmatch} Assume that $\Omega_0 \subset \Omega \subset \R^n$, and $S \subset \R^n$.  We say that $S$ and $\Omega$ are \em exact geometric matches on $\Omega_0$ \em if there exists a sub-domain $\Omega_c\subseteq\Omega$ which compactly contains $\Omega_0$ and which is isometric to a sub-domain of $S$ (which, abusing notation, we also call $\Omega_c$). Recall that $\Omega_0$ being compactly contained in $\Omega_c$ means that the distance from $\overline{\Omega}_0$ to $\overline{\Omega\setminus \Omega_c}$ is positive. 
\end{definition}

Next, we recall the heat kernel in this context.  The heat kernel, $H$, is the Schwartz kernel of the fundamental solution of the heat equation.  It is therefore defined on $\Omega \times \Omega \times [0, \infty)$, and satisfies 
$$H(z, z', t) = H(z', z, t), \quad (\pa_t + \Delta)H (z, z', t)  = 0 \textrm{ for } t > 0,$$
$$H(z, z', 0) = \delta(z-z'), \quad \textrm{ in the distributional sense.}$$
Throughout we use the sign convention for the Laplacian, $\Delta$, on $\R^n$, that
$$\Delta = - \sum_{j=1} ^n \pa_j ^2.$$
We consider three boundary conditions in the statement of Theorem 1.  These are: 
\begin{enumerate} 
\item[(D)] the \em Dirichlet boundary condition, \em which requires the function to vanish on the boundary; 
\item[(N)] the \em Neumann boundary condition, \em which requires the normal derivative of the function to vanish on the boundary; 
\item[(R)] the \em Robin boundary condition, \em which requires the function, $u$, to satisfy the following equation on the boundary
\begin{equation} \label{rbc1} \alpha u + \beta \frac{\pa u}{\pa \nu} = 0, \quad \frac{\pa u}{\pa \nu} \textrm{ is the outward pointing normal derivative.} \end{equation} 
\end{enumerate} 

For $u \in \cL^2 (\Omega)$, the heat equation with initial data given by $u$ is then solved by 
$$\int_\Omega H(z, z', t) u(z') dz'.$$
Moreover, if $\Omega$ is a bounded domain, and $\{ \phi_k \}_{k \geq 1}$ is an orthonormal basis for $\cL^2(\Omega)$ consisting of eigenfunctions of the Laplacian satisfying the appropriate boundary condition, with corresponding eigenvalues $\{ \lambda_k \}_{k \geq 1}$, then the heat kernel 
$$H(z, z', t) = \sum_{k \geq 1} e^{-\lambda_k t} \phi_k (z) \phi_k (z').$$

\subsection{Locality principle for Dirichlet boundary condition}
In the general context of domains in $\R^n$ which have only piecewise smooth boundary, the key point is that the locality principle should hold \em up to the boundary. \em  This differs from many previous presentations of a locality principle. For example, in \cite[Theorem 1.1]{list}, it is proved that without any condition on the regularity of the boundary, for any choice of self-adjoint extension of the Laplacian on $\Omega \subset \R^n$, the heat kernel for this self adjoint extension of the Laplacian on $\Omega$, denoted by $H^\Omega$ satisfies 
$$|H^\Omega (x,y,t) - H^0 (x,y,t)| \leq (C_a \rho(x,y)^{-n} + C_b) \cdot \frac{ \exp \left( - \frac{ (\rho(x) + \rho(y))^2}{4t} \right) }{t^{2 \lceil \frac{n+1}{2} \rceil - \frac 1 2 }}.$$
Above, $H^0$ is the heat kernel for $\R^n$, $\rho(x) = \dist(x, \pa \Omega)$, $\rho(x,y) = \min( \rho(x), \rho(y))$.  The constants $C_a$ and $C_b$ can also be calculated explicitly according to \cite{list}.  Clearly, the estimate loses its utility as one approaches the boundary.  

In the case of smoothly bounded domains, there is a result of L\"uck \& Schick \cite[Theorem 2.26]{luck}, which implies the locality principle for Dirichlet (and Neumann) boundary condition, and which holds all the way up to the boundary.  We recall that result.  

\begin{theorem}[L\"uck \& Schick]  Let $N$ be a Riemannian manifold possibly with boundary which is of bounded geometry.  Let $V \subset N$ be a closed subset which carries the structure of a Riemannian manifold of the same dimension as $N$ such that the inclusion of $V$ into $N$ is a smooth map respecting the Riemannian metrics.  (We make no assumptions about the boundaries of $N$ and $V$ and how they intersect.)  For fixed $p \geq 0$, let $\Delta[V]$ and $\Delta[N]$ be the Laplacians on $p$-forms on $V$ and $N$, considered as unbounded operators with either absolute boundary conditions or with relative boundary conditions (see Definition 2.2 of \cite{luck}).  Let $\Delta[V]^k e^{-t \Delta[V]} (x,y)$ and $\Delta[N]^k e^{-t \Delta[N]} (x,y)$ be the corresponding smooth integral kernels.  Let $k$ be a non-negative integer.

Then there is a monotone decreasing function $C_k (K): (0, \infty) \to (0, \infty)$ which depends only on the geometry of $N$ (but not on $V$, $x$, $y$, $t$) and a constant $C_2$ depending only on the dimension of $N$ such that for all $K>0$ and $x,y \in V$ with $d_V (x) := d(x, N\setminus V) \geq K$, $d_V(y) \geq K$ and all $t>0$:
$$\left| \Delta[V]^k e^{-t \Delta[V]} (x,y) - \Delta[N]^k e^{-t \Delta[N]} (x,y) \right| \leq C_k (K) e^{- \left( \frac{d_V(x)^2 + d_V(y)^2 + d(x,y)^2}{C_2 t} \right)}.$$
\end{theorem}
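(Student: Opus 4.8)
\medskip
\noindent\textbf{Proof idea.}
The plan is to reduce the estimate to finite propagation speed for the wave equation via the subordination (transmutation) formula, and then to upgrade the resulting $\cL^2$--operator bound to a pointwise kernel bound using elliptic regularity and Sobolev embedding with constants made uniform by the bounded geometry of $N$. Throughout, functions on $V$ are extended by zero to $N$ and functions on $N$ restricted to $V$ as needed, so that $D_t:=\Delta[V]^k e^{-t\Delta[V]}-\Delta[N]^k e^{-t\Delta[N]}$ is a well-defined operator on $\cL^2(N)$ whose Schwartz kernel at $(x,y)\in V\times V$ is the difference appearing in the statement. Since $\Delta[V]$ and $\Delta[N]$ are non-negative self-adjoint, the spectral theorem together with the identity $\lambda^{2k}e^{-t\lambda^2}=\frac{(-1)^k}{\sqrt{4\pi t}}\int_\R\big(\pa_s^{2k}e^{-s^2/4t}\big)\cos(s\lambda)\,ds$ (obtained from $\int_\R e^{-s^2/4t}\cos(s\lambda)\,ds=\sqrt{4\pi t}\,e^{-t\lambda^2}$, the relation $\Delta\cos(s\sqrt\Delta)=-\pa_s^2\cos(s\sqrt\Delta)$, and $2k$ integrations by parts in $s$) would give
\begin{equation*}
D_t=\frac{(-1)^k}{\sqrt{4\pi t}}\int_\R\big(\pa_s^{2k}e^{-s^2/4t}\big)\,W(s)\,ds,
\end{equation*}
where $W(s):=\cos(s\sqrt{\Delta[V]})-\cos(s\sqrt{\Delta[N]})$, where $\pa_s^{2k}e^{-s^2/4t}=t^{-k}p_k(s/\sqrt t)e^{-s^2/4t}$ for a fixed polynomial $p_k$ of degree $2k$, and where $\|W(s)\|_{\cL^2\to\cL^2}\le 2$.

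Next I would invoke finite propagation speed. For the Hodge Laplacian on $p$--forms with absolute or relative boundary conditions --- local boundary conditions for which the energy identity and the domain-of-dependence argument for the wave equation hold --- the propagator $\cos(s\sqrt\Delta)$ moves supports at unit speed. Fix $x,y\in V$ with $d_V(x),d_V(y)\ge K$. Then $\cos(s\sqrt{\Delta[V]})(x,y)=\cos(s\sqrt{\Delta[N]})(x,y)=0$ for $|s|<d(x,y)$; and if $|s|<d_V(x)$, the wave solution on $V$ with Cauchy data $(\delta_x,0)$ is supported in $\overline{B(x,|s|)}\subset B(x,d_V(x))$, a region in which $V$ is isometric to $N$ and meets only a piece of $\pa N$, so it solves the same initial--boundary value problem as the corresponding solution on $N$ (likewise supported in $\overline{B(x,|s|)}$); uniqueness then yields $\cos(s\sqrt{\Delta[V]})(x,y)=\cos(s\sqrt{\Delta[N]})(x,y)$ for all $y$, and the symmetric argument at $y$ gives the same for $|s|<d_V(y)$. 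Hence $W(s)(x,y)=0$ for $|s|<R(x,y):=\max\{d_V(x),d_V(y),d(x,y)\}$, so the $s$--integral for $D_t(x,y)$ runs only over $|s|\ge R(x,y)$; note $R(x,y)\ge K$ and $R(x,y)^2\ge\frac13\big(d_V(x)^2+d_V(y)^2+d(x,y)^2\big)$.

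Finally I would pass to the pointwise estimate. Choose cutoffs $\chi_x,\chi_y$ equal to $1$ near $x,y$, supported in $B(x,\tfrac14 d_V(x))$ and $B(y,\tfrac14 d_V(y))$, with derivatives controlled in terms of $K$; on their supports $V$ is isometric to $N$, so bounded geometry of $N$ supplies uniform constants for elliptic estimates and for $H^m\hookrightarrow C^0$ with $m>\tfrac12\dim N$. Commuting $\Delta^m$ through $\chi_x,\chi_y$ (the commutators being lower-order and supported where $V=N$) bounds $\|\chi_x D_t\chi_y\|_{\cL^2\to H^m}$ and the analogous adjoint norm by $\int_{|s|\ge R(x,y)}\big(\text{polynomial in }s,\ t^{-1}\big)e^{-s^2/4t}\,ds$, which --- splitting $e^{-s^2/4t}=e^{-s^2/8t}e^{-s^2/8t}$ and using $|s|\ge R(x,y)$ in one factor --- is at most $(\text{polynomial in }t^{-1})\,e^{-R(x,y)^2/8t}$; Sobolev embedding then controls the continuous kernel of $\chi_x D_t\chi_y$, which equals $D_t(x,y)$ at $(x,y)$, by the same quantity. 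Since $R(x,y)\ge K$, the remaining negative powers of $t$ are absorbed via $\sup_{t>0}t^{-N}e^{-R(x,y)^2/16t}\le C_N K^{-2N}$, leaving $|D_t(x,y)|\le C_k(K)\,e^{-R(x,y)^2/16t}\le C_k(K)\,e^{-(d_V(x)^2+d_V(y)^2+d(x,y)^2)/(48t)}$ with $C_k(K)$ monotone decreasing in $K$ and depending only on $k$ and the bounded-geometry data of $N$ (so $C_2=48$ works). The hard part is precisely this localization: it must be delicate enough that elliptic regularity and Sobolev embedding are invoked only where $V$ is isometric to the bounded-geometry manifold $N$, while finite propagation --- which itself demands care with the absolute/relative boundary conditions for the de Rham wave equation --- is used to confine the $s$--integral to $|s|\ge R(x,y)$ and extract the Gaussian decay.
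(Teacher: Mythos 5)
The paper does not prove this theorem at all --- it is quoted verbatim from L\"uck and Schick \cite{luck} (their Theorem 2.26) --- and your sketch reproduces essentially their original argument: subordination of $\Delta^k e^{-t\Delta}$ to the wave propagator $\cos(s\sqrt{\Delta})$, unit propagation speed to show the two propagators' kernels agree for $|s|<\max\{d_V(x),d_V(y),d(x,y)\}$, and elliptic/Sobolev estimates with constants made uniform by the bounded geometry of $N$ to convert the resulting $\cL^2$ bounds into the pointwise Gaussian estimate. Your outline is correct as a proof strategy, with the caveat you yourself flag: the genuinely nontrivial ingredients --- the energy/domain-of-dependence argument giving finite propagation speed and uniqueness for the de Rham wave equation with absolute or relative boundary conditions, and the uniform up-to-the-boundary elliptic estimates near $\pa N$ --- are precisely what the corresponding section of \cite{luck} is devoted to establishing.
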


One may therefore compare the heat kernels for the Laplacian acting on functions, noting (see p. 362 of \cite{taylor}) that relative boundary conditions are Dirichlet boundary conditions, and absolute boundary conditions are Neumann boundary conditions.  We present this as a corollary to L\"uck and Schick's theorem.
\begin{corollary}
Assume that $S$ is an exact match for a piece of $\Omega$, for two smoothly bounded domains in $\R^n$.  Assume the same boundary condition, either Dirichlet or Neumann, for the Euclidean Laplacian on both domains.  Then 
$$\left| H^\Omega (z, z', t) - H^S (z, z', t) \right| = O(t^\infty) \textrm{ as } t \downarrow 0, \quad \textrm{ uniformly for } z, z' \in \Omega_0.$$
\end{corollary}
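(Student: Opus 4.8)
The plan is to invoke the theorem of L\"uck \& Schick twice, using the common sub-domain $\Omega_c$ from Definition \ref{exactmatch} as an intermediary; that theorem compares $\Delta[V]$ on $V$ with $\Delta[N]$ on $N$ only when $V\subseteq N$, whereas $\Omega$ and $S$ are distinct domains sharing merely an isometric copy of $\Omega_c$. First I would unwind the definition: there is a bounded sub-domain $\Omega_c\subseteq\Omega$ compactly containing $\Omega_0$ and isometric to a sub-domain of $S$, which I also denote $\Omega_c$. Fix $K>0$ smaller than $\dist(\overline{\Omega}_0,\overline{\Omega\setminus\Omega_c})$, which is positive by Definition \ref{exactmatch}, and smaller than the corresponding distance $\dist(\overline{\Omega}_0,\overline{S\setminus\Omega_c})$ for the isometric copy, which is likewise positive in the situations of interest and is what the argument requires. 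On $\Omega_c$ impose relative boundary conditions if the ambient condition is (D), and absolute boundary conditions if it is (N); on functions these are exactly Dirichlet and Neumann (p.\ 362 of \cite{taylor}), they agree with the ambient condition on $\partial\Omega_c\cap\partial\Omega$, and they assign \emph{some} fixed self-adjoint condition on the interior interface $\partial\Omega_c\cap\Omega$. Let $H^{\Omega_c}$ be the resulting heat kernel --- one and the same object in both applications below.

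Next, apply L\"uck \& Schick with $p=0$, $k=0$, first to $V=\overline{\Omega}_c\subseteq N=\Omega$, then to $V=\overline{\Omega}_c\subseteq N=S$ via the isometric copy. Both $\Omega$ and $S$ are smoothly bounded, hence of bounded geometry, and in each case the inclusion of $\Omega_c$ is a smooth, metric-respecting inclusion of a closed subset of full dimension, so the hypotheses hold. For $z,z'\in\Omega_0$ we have $d_V(z),d_V(z')\geq K$ (since to leave $\overline{\Omega}_c$ one must cross $\partial\Omega_c\cap\Omega$), so, discarding the term $d(z,z')^2\geq 0$ from the exponent, the theorem gives
\[ \left|H^{\Omega_c}(z,z',t)-H^{\Omega}(z,z',t)\right|\leq C_0(K)\,e^{-2K^2/(C_2 t)}\quad\text{and}\quad\left|H^{\Omega_c}(z,z',t)-H^{S}(z,z',t)\right|\leq C_0'(K)\,e^{-2K^2/(C_2' t)}, \]
where $C_0(K),C_2$ depend only on $n$ and the geometry of $\Omega$, and $C_0'(K),C_2'$ only on $n$ and the geometry of $S$ --- and, crucially, not on $z$, $z'$, or $t$. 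The triangle inequality then yields $\left|H^{\Omega}(z,z',t)-H^{S}(z,z',t)\right|\leq C\,e^{-c/t}$ with $C,c>0$ depending only on $K$, $n$, and the two ambient geometries; since $t^{-M}e^{-c/t}\to 0$ as $t\downarrow 0$ for every $M\in\N$, the left side is $O(t^\infty)$, uniformly for $z,z'\in\Omega_0$.

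I do not expect a real obstacle: the statement is a two-step bookkeeping argument around the cited theorem. The one point meriting care is the admissibility of $\overline{\Omega}_c$ as the subset $V$ --- where the interface $\partial\Omega_c\cap\Omega$ meets $\partial\Omega$, $\overline{\Omega}_c$ is in general only a manifold with corners. But L\"uck \& Schick explicitly make no assumption on how the boundaries of $V$ and $N$ intersect, so this is already covered; alternatively, one may thicken the collar separating $\Omega_0$ from $\partial\Omega_c\cap\Omega$ so that the intersection locus avoids a neighbourhood of $\Omega_0$, which changes nothing because all constants are controlled by $K$ and the fixed geometries. Likewise, the arbitrary-but-fixed choice of condition on $\partial\Omega_c\cap\Omega$ is immaterial: the content of L\"uck \& Schick is precisely that this artificial boundary is not felt at distance $\geq K$.
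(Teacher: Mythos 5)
Your proposal is correct and follows essentially the same route as the paper: two applications of L\"uck \& Schick with $V=\Omega_c$ (once in $N=\Omega$, once in $N=S$), choice of $K$ bounded below by the positive distance guaranteed by the exact-match definition, and the triangle inequality. Your added care about fixing one and the same self-adjoint condition on the artificial interface $\partial\Omega_c\cap\Omega$ is a sensible point that the paper leaves implicit, but it does not change the argument.
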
 
\begin{proof} 
We use the theorem of L\"uck and Schick twice, once with $N=\Omega$ and once with $N=S$, with $V=\Omega_c$ in both cases.  We set $k=0$ and 
$$K=\alpha =d(\Omega_0,S\setminus\Omega_c).$$ 
By the assumption of an exact geometric match, $\alpha>0$.  In the $N=S$ case, the theorem reads
\begin{equation*}
	|H^S(t,z,z')-H^{\Omega_c}(t,z,z')| \leq C_0(\alpha)e^{-\frac{|\mathrm{dist}(z,S\setminus\Omega_c)|^2}{C_2 t} - \frac{|\mathrm{dist}(z',S\setminus\Omega_c)|^2}{C_2 t} }\leq C_0(\alpha)e^{-\frac{2 \alpha^2}{C_2 t}}.
	\end{equation*}
We conclude that
	\begin{equation*}
	|H^S(t,z,z')-H^{\Omega_c}(t,z,z')|=O(t^{\infty})
	\end{equation*}
	uniformly on $\Omega_0$. The same statement holds with $S$ replaced by $\Omega$, and then the triangle inequality completes the proof.  
\end{proof} 

Although it may seem quite general, the assumption of smooth boundary is quite restrictive, and the proof in \cite{luck} relies heavily on this assumption.  To the best of our knowledge, the first locality result which holds all the way up to the boundary and includes domains which have only piecewise smooth boundary, but may have for example corners, was demonstrated by van den Berg and Srisatkunarajah \cite{vdbs}.  We note that this result is not stated in the precise form below in \cite{vdbs}, but upon careful reading, it is straightforward to verify that this result is indeed proven there and is also used in several of their calculations.  

\begin{theorem}[van den Berg \& Srisatkunarajah]
Let $\Omega \subset \R^2$ be a polygonal domain.  Let $H^\Omega$ denote the heat kernel for the Laplacian on $\Omega$ with the Dirichlet boundary condition.  Then, for $S=S_\gamma$, a sector of opening angle $\gamma$, and for any corner of $\Omega$ with opening angle $\gamma$, there is a neighborhood $\cN_\gamma$ such that $$|H^\Omega (x,y,t) - H^{S_\gamma} (x,y,t)| = O(t^\infty), \quad \textrm{uniformly} \quad \forall (x,y) \in \cN_\gamma \times \cN_\gamma,$$
Above, $H^{S_\gamma}$ denotes the heat kernel for $S_\gamma$ with the Dirichlet boundary condition.  Moreover, for any $\cN_e \subset \Omega$ which is at a positive distance to all corners of $\Omega$, 
$$|H^\Omega(x,y,t) - H^{\R^2 _+} (x,y,t)| = O(t^\infty), \quad \textrm{uniformly} \quad \forall (x,y) \in \cN_e \times \cN_e.$$
Above, $H^{\R^2 _+}$ denotes the heat kernel for a half space with the Dirichlet boundary condition. 
\end{theorem}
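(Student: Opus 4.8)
The plan is to deduce both assertions from a single ingredient: the \emph{domain monotonicity} of the Dirichlet heat kernel, namely that $U\subseteq V\subseteq\R^2$ implies $0\le H^{U}\le H^{V}$ pointwise, which follows at once from the parabolic maximum principle applied to $H^{V}(\cdot,y,\cdot)-H^{U}(\cdot,y,\cdot)$ on $U$ (using that $H^{V}\ge 0$ and that both kernels have initial data $\delta_{y}$). Two consequences are used repeatedly. First, taking $V=\R^{2}$ gives the Gaussian bound $0\le H^{U}(z,w,t)\le(4\pi t)^{-1}e^{-|z-w|^{2}/4t}$ for every $U$. Second, if $\Omega$ and $S$ are exact geometric matches, one may choose an open set $B$ on which $\Omega$ and $S$ literally coincide --- a small disc centred at a vertex in the corner case, and a tubular neighbourhood of a relatively compact arc of an edge in the edge case --- so that $\Omega\cap B$ is isometric to $S\cap B$ and hence
$$H^{\Omega\cap B}=H^{S\cap B},\qquad H^{\Omega\cap B}\le H^{\Omega},\qquad H^{S\cap B}\le H^{S}.$$

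Fix such a $B$ and a smaller neighbourhood $\cN$ of the feature --- this will be the $\cN_{\gamma}$ (with $S=S_{\gamma}$) or the $\cN_{e}$ (with $S=\R^{2}_{+}$) of the statement --- and set $\delta:=\dist(\overline{\cN},\,\pa B\cap\overline{\Omega})>0$. By the sandwich above it is enough to prove
$$0\le H^{\Omega}(x,y,t)-H^{\Omega\cap B}(x,y,t)\le A\,t^{-1}e^{-\delta^{2}/4t}\qquad(x,y\in\cN,\ 0<t<\tfrac{\delta^{2}}{4}),$$
together with its analogue with $\Omega$ replaced by $S$ (note that $S\cap B$ is bounded even when $S$ is not); the triangle inequality then gives $|H^{\Omega}-H^{S}|=O(t^{\infty})$ uniformly on $\cN\times\cN$, which is exactly the claim. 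To prove the displayed estimate, fix $y\in\cN$ and put $v:=H^{\Omega}(\cdot,y,\cdot)-H^{\Omega\cap B}(\cdot,y,\cdot)$ on the bounded domain $\Omega\cap B$. Then $v\ge 0$ by monotonicity, $(\pa_{t}+\Delta)v=0$, the initial data of $v$ vanishes, and on the spatial boundary $v$ vanishes on $\pa\Omega\cap\overline B$ (Dirichlet for both kernels) while on $\pa B\cap\overline{\Omega}$, where every point is at distance $\ge\delta$ from $y$, the Gaussian bound gives $v(z,y,s)=H^{\Omega}(z,y,s)\le(4\pi s)^{-1}e^{-\delta^{2}/4s}$. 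For $0<s<\delta^{2}/4$ the spatially constant function $u(s):=(4\pi s)^{-1}e^{-\delta^{2}/4s}$ is increasing, hence satisfies $(\pa_{t}+\Delta)u=u'(s)\ge 0$ and dominates $v$ on the entire parabolic boundary of $(\Omega\cap B)\times(0,t]$; the maximum principle forces $v\le u$ throughout, i.e.\ $H^{\Omega}(x,y,t)-H^{\Omega\cap B}(x,y,t)\le(4\pi t)^{-1}e^{-\delta^{2}/4t}$, uniformly in $x\in\cN$. Running the identical argument with $S$ in place of $\Omega$ completes the corner case ($B$ around a vertex of angle $\gamma$, $S=S_{\gamma}$) and, verbatim, the edge case ($B$ a tube around an edge arc, $S=\R^{2}_{+}$).

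The one step that needs genuine care --- not a conceptual obstacle, but the technical crux --- is justifying the maximum principle for $v$ near the parabolic corner, since $H^{\Omega}(\cdot,y,t)$ and $H^{\Omega\cap B}(\cdot,y,t)$ individually blow up near $z=y$ as $t\downarrow 0$, so one must know their difference $v$ extends continuously to $t=0$ with $v(\cdot,0)\equiv 0$. Here the probabilistic viewpoint of \cite{vdbs} is cleanest: $v(x,y,t)$ is precisely the contribution to the Brownian bridge from $x$ to $y$ in time $t$ of those paths that exit $B$, which vanishes at $t=0$ and exhibits no singularity at $z=y$, and which requires a displacement $\ge\delta$, giving the Gaussian decay directly. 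Analytically one instead first invokes the classical interior ``principle of not feeling the boundary'' (Kac, or \cite{luck}) on a ball about $y$ where $\Omega$ and $\Omega\cap B$ coincide, to see that $v=O(t^{\infty})$ there and hence is continuous up to $t=0$, after which the comparison above is rigorous. An alternative to the whole sandwich is a cut-and-paste parametrix: with $\psi\in C^{\infty}$ equal to $1$ near $\overline{\cN}$ and supported in the matched region, $\psi H^{S}$ solves the heat equation on $\Omega$ up to the commutator error $[\Delta,\psi]H^{S}=(\Delta\psi)H^{S}-2\n\psi\cdot\n H^{S}$, supported at distance $\ge\delta$ from $\cN$, and Duhamel's formula writes $H^{\Omega}-H^{S}$ on $\cN\times\cN$ as an integral of $H^{\Omega}$ against this error, which the Gaussian bounds above plus Gaussian gradient estimates for $H^{S}$ (up to its smooth, corner-free boundary --- explicit for $H^{S_{\gamma}}$) render $O(t^{\infty})$ after the standard splitting of the $s$-integral at $s=t/2$. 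Finally, at a purely interior point the natural model is $\R^{2}$ rather than $\R^{2}_{+}$, so the second assertion is to be read with $\cN_{e}$ a collar of an edge, or more generally any set admitting an isometric embedding into a half-plane.
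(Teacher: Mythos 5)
Your argument is essentially correct, but it is not the route the paper takes: the paper offers no proof of this theorem at all, instead attributing it to \cite{vdbs} and remarking only that ``the proof uses probabilistic methods.'' What you have supplied is a self-contained analytic substitute. Its engine is domain monotonicity of the \emph{Dirichlet} heat kernel plus a spatially constant Gaussian barrier $u(s)=(4\pi s)^{-1}e^{-\delta^2/4s}$, which is a supersolution precisely for $s<\delta^2/4$; the sandwich $H^{\Omega\cap B}=H^{S\cap B}\le H^\Omega, H^S$ on the exactly matched patch $B$ reduces everything to bounding $H^\Omega-H^{\Omega\cap B}$ and $H^S-H^{S\cap B}$, and the comparison on the parabolic boundary does the rest. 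This is clean and elementary, and it makes transparent why the theorem is stated only for Dirichlet conditions: monotonicity and the sign information $v\ge 0$ are unavailable for Neumann and Robin, which is exactly why the paper must develop the separate parametrix-based locality principles of its Theorems 5 and 6 for those cases. The probabilistic proof in \cite{vdbs} writes the same difference as the Brownian-bridge contribution of paths exiting $B$, which yields the Gaussian decay and the vanishing at $t=0$ in one stroke; your version buys the same estimate at the cost of having to justify separately that $v$ extends continuously by zero to $t=0$ despite the cancelling on-diagonal singularities. On that one point, your analytic patch is slightly off as stated: invoking the interior Kac principle ``on a ball about $y$'' compares with the free kernel on $\R^2$ and requires $y$ to be at positive distance from $\pa\Omega$, whereas $\cN_\gamma$ contains boundary points (including the vertex itself). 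The standard fix is the semigroup shift: apply the maximum principle to $z\mapsto H^\Omega(z,y,t+\epsilon)-H^{\Omega\cap B}(z,y,t+\epsilon)$, whose initial data at $t=0$ is continuous, and let $\epsilon\downarrow 0$; alternatively, fall back on the probabilistic representation, as the cited reference does. With that repair the proof is complete, and your closing remark about how to read the edge case (that $\cN_e$ must sit in a collar of a single edge so that the isometric identification with a subset of $\R^2_+$ makes sense) is a fair clarification of a loosely stated hypothesis.
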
 

The proof uses probabilistic methods.  We are currently unaware of a generalization to domains with corners in higher dimensions.  However, it is reasonable to expect that such a generalization holds.  Since Theorem 1 has already been demonstrated for the Dirichlet boundary condition in \cite{corners}, we are interested in the Neumann and Robin boundary conditions.  For this reason, we shall give a proof of a locality principle for both Neumann and Robin boundary conditions which holds in all dimensions, for domains with piecewise smooth boundary (in fact, only piecewise $\cC^3$ boundary is required), as long as we have a suitable estimate on the second fundamental form on the boundary.  Moreover, our locality principle, similar to that of \cite{vdbs}, allows one to compare the heat kernels all the way up to the boundary.  

\subsection{Locality principle for Neumann boundary condition}
Here we prove a locality principle for domains in $\R^n$ with piecewise $\cC^2$ boundary satisfying some relatively general geometric assumptions. The following is a uniform version of an interior cone condition:

\begin{definition} 
Let $\epsilon>0$ and $\delta>0$. We say that a domain $\Omega \subset \R^n$ satisfies the $(\eps,\delta)$-cone condition if, for every $x \in \pa \Omega$, there exists a ball $B(x,\delta)$ centered at $x$ of radius $\delta$, and a direction $\xi_x$, such that for all $y \in B(x, \delta)\cap \Omega$, the cone with vertex $y$ directed by $\xi_x$ of angle $\eps$ is contained in $\Omega$.  
\end{definition}

\begin{definition} 
	Let $\epsilon>0$ and $h>0$. We say that a domain $\Omega \subset \R^n$ satisfies the $(\eps,h)$-cone condition if, for every $x \in \pa \Omega$, there exists a ball $B(x,\delta)$ centered at $x$ of radius $\delta$, and a direction $\xi_x$, such that for all $y \in B(x, \delta)\cap \Omega$, the cone with vertex $y$ directed by $\xi_x$ of angle $\eps$ and height $h$ is contained in $\Omega$.  
\end{definition}

\begin{definition} 
	Let $\epsilon>0$ and $h>0$. We say that a domain $\Omega \subset \R^n$ satisfies the both side $(\eps,h)$-cone condition if both $\Omega$ and $\mathbb{R}^n\setminus\Omega$ satisfy the $(\eps,h)$-cone condition.
\end{definition}

\begin{theorem}[Locality Principle for Neumann Boundary Condition]  \label{thml1} 
Let $\Omega$, $\Omega_0$, and $S$ be domains in $\R^n$ such that $S$ and $\Omega$ are exact geometric matches on $\Omega_0$, as in Definition \ref{exactmatch}.  Assume that both $\Omega$ and $S$ satisfy the $(\eps,\delta)$-cone condition for some $\eps > 0$ and $\delta>0$.  Let $H^\Omega$ denote the heat kernel associated to the Laplacian on $\Omega$, and let $H^{S}$ denote the heat kernel on $S$, with the same boundary condition for $\pa S$ as taken on $\pa \Omega$.  Moreover, assume that there exists $\sigma \in \R$ such that the second fundamental form $\sff \geq - \sigma$ holds on all the $\cC^2$ pieces of $\pa \Omega$ and $\pa S$.  Then
$$\left| H^\Omega (z, z', t) - H^{S} (z, z', t) \right| = O(t^\infty) \textrm{ as } t \downarrow 0, \quad \textrm{ uniformly for } z, z' \in \Omega_0.$$
\end{theorem}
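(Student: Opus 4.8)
The plan is to use a parametrix argument based on heat kernel comparison via Duhamel's principle, combined with a probabilistic or sub/super-solution estimate to control the behavior near the boundary. The starting point is the same as in the Dirichlet case: since $S$ and $\Omega$ agree isometrically on the subdomain $\Omega_c$ compactly containing $\Omega_0$, the difference $u(z,z',t) := H^\Omega(z,z',t) - H^S(z,z',t)$ satisfies the heat equation in the interior of $\Omega_c$ (in both variables) with zero initial data, and the Neumann boundary condition is respected on the portion of $\pa\Omega_c$ that lies in $\pa\Omega$ (equivalently in $\pa S$). The only ``source'' of discrepancy is the part of $\pa\Omega_c$ interior to $\Omega$ (resp.\ $S$), which sits at distance $\geq \alpha = d(\Omega_0, \Omega\setminus\Omega_c) > 0$ from $\Omega_0$. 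So I would write $u$ via Duhamel against this interior interface and reduce the theorem to showing that $H^\Omega$ and $H^S$ individually are exponentially small, like $e^{-c\alpha^2/t}$, when one argument is in $\Omega_0$ and the other is near that interface.

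The heart of the matter is therefore a Gaussian off-diagonal upper bound for the Neumann heat kernel that is uniform up to the boundary: there exist $A,B>0$ (depending on $n$, $\eps$, $\delta$, and $\sigma$) such that
$$
H^\Omega(z,z',t) \leq A t^{-n/2} e^{-B\, d(z,z')^2/t}
$$
for all $z,z'\in\Omega$ and all $t \in (0,1]$. First I would establish this. The natural route is the Li--Yau / Grigor'yan machinery: a uniform Gaussian upper bound follows from an on-diagonal bound $H^\Omega(z,z,t)\leq A t^{-n/2}$ plus the integrated maximum principle (Davies' method). The on-diagonal bound in turn follows from a uniform Sobolev or Faber--Krahn inequality for $\Omega$, which is exactly where the $(\eps,\delta)$-cone condition enters — it guarantees that $\Omega$ has, at every scale up to $\delta$, a uniform interior cone, hence a uniform extension/Sobolev inequality with constants independent of the point. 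The lower bound $\sff \geq -\sigma$ on the $\cC^2$ pieces is the ingredient that lets one control the heat semigroup near the smooth part of the boundary — it prevents the boundary from curving inward too sharply and gives the doubling/Poincaré properties one needs (alternatively, via a reflection argument near smooth boundary points, it ensures the reflected operator has bounded-below curvature). One must be slightly careful at the non-smooth edges/corners where $\pa\Omega$ is only piecewise $\cC^2$: there the cone condition alone carries the estimate, since near such points $\Omega$ still contains a uniform cone and the Faber--Krahn constant degrades only in a controlled way.

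Once the uniform Gaussian upper bound is in hand, I would finish as follows. Apply it on $S$ (legitimate, since $S$ also satisfies the cone condition and the second-fundamental-form bound): for $z\in\Omega_0$ and $z'$ on the interior interface $\pa\Omega_c \cap S$, $d(z,z')\geq\alpha$, so $H^S(z,z',t)\leq A t^{-n/2}e^{-B\alpha^2/t} = O(t^\infty)$ uniformly, and likewise for $H^\Omega$. Feeding these into the Duhamel representation of $u$ — and using that the fundamental solution being integrated is itself nonnegative with unit-or-less mass, so the convolution in $t$ only costs a bounded factor — gives $|u(z,z',t)| = O(t^\infty)$ uniformly for $z,z'\in\Omega_0$, which is the claim. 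I expect the main obstacle to be the first step: proving the uniform-up-to-the-boundary Gaussian upper bound for the Neumann Laplacian under only piecewise $\cC^2$ regularity, since the classical references for such bounds typically assume either smooth boundary, convexity, or at least Lipschitz boundary with no corners pointing inward, and one has to verify that the $(\eps,\delta)$-cone condition plus $\sff\geq-\sigma$ suffice to run the Faber--Krahn/Davies argument with constants that do not blow up near edges and corners. A secondary subtlety is making the Duhamel reduction rigorous across the piecewise-smooth interface (justifying the boundary-term manipulations in Green's identity when $\pa\Omega_c$ meets $\pa\Omega$ at a corner), which I would handle by a routine approximation/cutoff argument.
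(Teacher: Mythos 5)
Your overall strategy---a Duhamel-type comparison of the two kernels on the overlap region, powered by an off-diagonal Gaussian upper bound for the Neumann heat kernel obtained from the $(\eps,\delta)$-cone condition---is in the same family as the paper's argument, and you correctly identify the uniform-up-to-the-boundary Gaussian bound under weak boundary regularity as a central ingredient (the paper imports it from Daners' results rather than re-running the Faber--Krahn/Davies machinery, but that step is sound). The gap is in the reduction. If you represent $u=H^\Omega-H^S$ by Green's identity/Duhamel over the interior interface $\partial\Omega_c\setminus\partial\Omega$, the resulting boundary integral contains not only the values of the heat kernels on the interface but also their normal derivatives (and those of whatever auxiliary kernel on $\Omega_c$ you integrate against). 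The sup bound $H\leq At^{-n/2}e^{-Bd^2/t}$ alone therefore does not close the argument: you also need an off-diagonal gradient estimate of the form $|\nabla H|\leq C t^{-(n+1)/2}e^{-|x-y|/(Ct)}$ for the Neumann heat kernels, valid up to a boundary that is only piecewise $\cC^2$. That gradient bound is precisely the hard ingredient, and it is where the hypothesis $\sff\geq-\sigma$ actually enters (via the Wang--Yan gradient estimate for the Neumann semigroup); it is not needed for the on-diagonal/Faber--Krahn step as your sketch suggests. A secondary problem is that your representation presupposes a heat kernel on the auxiliary domain $\Omega_c$ with mixed boundary conditions (Neumann on $\partial\Omega_c\cap\partial\Omega$, something else on the interface), whose existence and estimates are not free, since $\Omega_c$ itself acquires corners where the interface meets $\partial\Omega$.

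The paper sidesteps both difficulties with an interior patchwork parametrix: one sets $G=\sum_j\tilde\chi_j H^S\chi_j$ for a partition of unity subordinate to the decomposition, so the error $E=(\partial_t+\Delta)G=\sum_j[\Delta,\tilde\chi_j]H^S\chi_j$ is supported a positive distance from the diagonal and is $O(t^\infty)$ once one has both the Gaussian bound (Daners) and the gradient bound (Wang--Yan) for the \emph{model} kernels; the error is then iterated away by a Volterra/Neumann series $K=E-E*E+\cdots$, using compactness of $\Omega$. Note also that for Neumann conditions there is no domain monotonicity of heat kernels, so the sub/super-solution route you allude to at the outset is not available; any correct proof must go through a parametrix-plus-iteration or an equivalent quantitative argument.
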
 


\begin{proof} 
We use a patchwork parametrix construction, as discussed in section 3.2 of \cite{polyakov}. This is a general technique that works to construct heat kernels whenever we have exact geometric matches for each part of a domain.

Let $\{\chi_j\}_{j=1}^2$ be a $\cC^{\infty}$ partition of unity on $\Omega$. Assume that $\tilde\chi_j \in \cC^{\infty}(\Omega)$ is identically 1 on a small neighborhood of the support of $\chi_j$ and vanishes outside a slightly larger neighborhood. In particular, we choose $\tilde \chi_1$ to be identically equal to $1$ on $\Omega_0$ and to have support equal to $\Omega_c$.  We then define the patchwork heat kernel
\[G(t,z,z'):=\sum_{j=1}^2 \tilde\chi_j(z) H^{S}(t,z,z')\chi_j(z').\]
We claim that in fact, uniformly for all $z$, $z'\in\Omega_0$,
\[|H^{\Omega}(t,z,z')-G(t,z,z')|=O(t^{\infty}), \quad t \downarrow 0.\]
That is, we claim that the patchwork heat kernel is equal to the true heat kernel with an error that is $O(t^{\infty})$ for small time. This claim immediately implies our result, since on $\Omega_0$, $\chi_1=1$, and $\tilde\chi_1=1$, and thus $G(t,z,z')=H^{S}(t,z,z')$.  

To prove the claim, we follow the usual template. Observe that
\[E(t,z,z'):=(\partial_t+\Delta)G(t,z,z)=\sum_{j=1}^{2}[\Delta,\tilde\chi_j(z)]H^{S}(t,z,z')\chi_j(z').\]
Each commutator $[\Delta,\tilde\chi_j(z)]$ is a first-order differential operator with support a positive distance from the support of $\chi_j$. Thus $E(t,z,z')$ is a sum of model heat kernels and their first derivatives, cut off so that their spatial arguments are a positive distance from the diagonal. We claim each such term is $O(t^{\infty})$.  To obtain this estimate, we use \cite[Theorem 1.1]{wang1}, which gives the estimate 
$$| \nabla H^D (x,y,t) | \leq \frac{C_\alpha}{t^{(n+1)/2}} \exp \left( - \frac{|x-y|}{C_\beta t} \right), \quad x,y \in D,$$
for some constants $C_\alpha, C_\beta > 0$, for $D=\Omega$ and $D=S$.  The setting there is not identical, so we note the places in the proof where minor modifications are required.  First, the assumption that $\Omega$ is compact is used there to obtain estimates for \em all $t>0$.  \em  In particular, the discreteness of the spectrum is used to obtain long time estimates by exploiting the first positive eigenvalue in (2.1) of \cite{wang1}.  Since we are only interested in $t \downarrow 0$, this long time estimate is not required.  Next, compactness is used to be able to estimate the volume of balls, $|B(x, \sqrt{t})| \geq C_\eps t^{\frac n 2}$, for a uniform constant $C_\eps$.  However, we have this estimate due to the $(\eps,\delta)$-cone condition which is satisfied for both $\Omega$ and $S$ which are contained in $\R^n$.  Moreover, we have verified \cite{wang5} that the assumption of piecewise $\cC^2$ boundary (rather than $\cC^2$ boundary) is sufficient for the proof of \cite[Theorem 1.1]{wang1}, as well as the references used therein:  \cite{wang2}, \cite{wang3}, \cite{wang4}.\footnote{We have also verified in private communication with F. Y. Wang that the arguments in \cite{wang1}, \cite{wang2}, \cite{wang3}, \cite{wang4} apply equally well under the curvature assumption $\sff \geq - \sigma$ for piecewise $\cC^2$ boundary.}  

Since the domains $S$ and $\Omega$ satisfy the $(\eps,\delta)$-cone condition, there are Gaussian upper bounds for the corresponding Neumann heat kernels. Specifically, as a result of \cite[Theorems 6.1, 4.4]{d}, for any $T>0$, there exist $C_1, C_2>0$ such that 
\begin{equation}\label{est_for_PC_Pomega}
|H^S(t,x,y)|\leq C_1 t^{- \frac{n}{2}} e^{-\frac{|x-y|^2}{C_2t}},
\qquad
|H^{\Omega}(t,x,y)|\leq C_1 t^{- \frac n 2} e^{-\frac{|x-y|^2}{C_2t}}
\end{equation}
on $(0,T]\times S\times S$ and $(0,T]\times \Omega\times \Omega$ respectively.  The upshot is that each term in the sum defining $E(t,z,z')$ is uniformly $O(t^{\infty})$ for all $z$ and $z'$ in $\Omega$, and therefore
\[|E(t,z,z')|=O(t^{\infty}).\]


From here, the error may be iterated away using the usual Neumann series argument, as in \cite{mave} or section 4 of \cite{sherconicdegen}. Letting $*$ denote the operation of convolution in time and composition in space, define
\[K:=E-E*E+E*E*E-\dots.\]
It is an exercise in induction to see that $K(t,z,z')$ is well-defined and also $O(t^{\infty})$ as $t$ goes to zero, see for example the proof of parts a) and b) of Lemma 13 of \cite{sherconicdegen}. Note that $\Omega$ is compact, which is key. Then the difference of the true heat kernel and the patchwork heat kernel is
\[H^{\Omega}(t,z,z')-G(t,z,z')=-(G*K)(t,z,z').\]
As in Lemma 14 of \cite{sherconicdegen}, this can also be bounded in straightforward fashion by $O(t^{\infty})$, which completes the proof.
\qed 
\end{proof}

The key ingredients in this patchwork construction are:  (1) the model heat kernels satisfy off-diagonal decay estimates, and (2) the gradients of these model heat kernels satisfy similar estimates.  The argument can therefore be replicated in any situation where all models satisfy those estimates. Here is one generalization:
\begin{corollary} Using the notation of Theorem \ref{thml1}, suppose that $\Omega$ is compact and that the heat kernels on both $\Omega$ and $S$ satisfy off-diagonal bounds of the following form: if $A$ and $B$ are any two sets with $d(A,B)>0$, then uniformly for $z\in A$ and $z'\in B$, we have
\begin{equation}\label{eq:offdiagupperbounds}|H(t,z,z')|+|\nabla H(t,z,z')|=O(t^{\infty})\mbox{ as }t\to 0.\end{equation}
Then the conclusion of Theorem \ref{thml1} holds.
\end{corollary}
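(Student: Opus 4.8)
The plan is to reproduce the patchwork parametrix argument of Theorem~\ref{thml1} essentially word for word, replacing the two appeals to external heat-kernel estimates (the gradient bound of \cite{wang1} and the Gaussian upper bound from \cite{d}) by a single invocation of the hypothesis~\eqref{eq:offdiagupperbounds}, which is precisely the conclusion those estimates were used to reach. First I would fix a smooth partition of unity $\{\chi_1,\chi_2\}$ on $\overline\Omega$ with $\chi_1$ supported inside $\Omega_c$ and identically $1$ on a neighborhood of $\Omega_0$, together with cutoffs $\tilde\chi_j$ equal to $1$ near $\operatorname{supp}\chi_j$ and with $\operatorname{supp}\tilde\chi_1=\Omega_c$, $\tilde\chi_1\equiv 1$ on $\Omega_0$. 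Because $S$ and $\Omega$ are exact geometric matches on $\Omega_0$, the kernel $H^S$ is defined on $\operatorname{supp}\chi_1\subset\Omega_c$ and there satisfies the same boundary condition as $H^\Omega$, so one may set
\[
G(t,z,z'):=\tilde\chi_1(z)\,H^{S}(t,z,z')\,\chi_1(z')+\tilde\chi_2(z)\,H^{\Omega}(t,z,z')\,\chi_2(z').
\]

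Next I would compute $E:=(\partial_t+\Delta)G$. Since $H^S$ and $H^\Omega$ both solve the heat equation, only the commutator terms survive:
\[
E(t,z,z')=[\Delta,\tilde\chi_1](z)\,H^{S}(t,z,z')\,\chi_1(z')+[\Delta,\tilde\chi_2](z)\,H^{\Omega}(t,z,z')\,\chi_2(z').
\]
Each $[\Delta,\tilde\chi_j]$ is a first-order operator supported in the transition region $A_j$ where $\tilde\chi_j$ is neither $0$ nor $1$, and by construction $d(A_j,\operatorname{supp}\chi_j)>0$ (and uniformly so, since $\Omega$ is compact). Applying~\eqref{eq:offdiagupperbounds} with $A=A_1$, $B=\operatorname{supp}\chi_1$ to $H^S$ and with $A=A_2$, $B=\operatorname{supp}\chi_2$ to $H^\Omega$ — where it is essential that the hypothesis controls the gradient as well as the kernel, since $[\Delta,\tilde\chi_j]$ differentiates once — gives $|E(t,z,z')|=O(t^\infty)$ uniformly on $\overline\Omega\times\overline\Omega$. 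From here everything is formal and uses only compactness of $\Omega$: with $*$ denoting space-time composition, $K:=\sum_{m\ge 1}(-1)^{m+1}E^{*m}$ is well defined and $O(t^\infty)$ uniformly (the induction of Lemma~13 of \cite{sherconicdegen} needs only the uniform bound $|E(t,\cdot,\cdot)|\le C_N t^N$ and $\operatorname{vol}(\Omega)<\infty$, not any Gaussian tail), and, as in the proof of Theorem~\ref{thml1}, $H^\Omega-G=-G*K$. Since $\int_\Omega|G(t,z,\cdot)|\le C_T$ on $(0,T]$ by the standard $L^\infty$-boundedness of these model heat semigroups over bounded time intervals, this yields $|H^\Omega(t,z,z')-G(t,z,z')|=O(t^\infty)$ uniformly. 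Finally, on $\Omega_0$ we have $\chi_1\equiv 1$, $\chi_2\equiv 0$, $\tilde\chi_1\equiv 1$, hence $G=H^S$ there, which is the claim.

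The real content of the corollary — and the reason it is worth isolating — is the bookkeeping claim that nothing beyond~\eqref{eq:offdiagupperbounds}, compactness, and generic heat-semigroup facts enters this argument. The two points to check are: (i) the near-diagonal behaviour of $H^S$ and $H^\Omega$ is not controlled by~\eqref{eq:offdiagupperbounds}, so one must confirm it is genuinely never used — it is not, because $G$ enters the final identity only through the composition $G*K$ with $K=O(t^\infty)$, where only the crude integrated bound on $G$ is needed; and (ii) the Neumann-series induction must be re-examined to see that a uniform $O(t^\infty)$ bound on $E$, without the Gaussian factor present in Theorem~\ref{thml1}, still suffices — it does, and in fact makes the iterated estimates easier rather than harder. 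Neither point is deep; carefully verifying (i) is the main, if modest, obstacle.
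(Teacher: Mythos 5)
Your proposal is correct and follows exactly the route the paper takes: its proof of this corollary is literally the one-sentence instruction to rerun the patchwork argument with a two-component partition of unity, using $H^S$ near $\Omega_0$ and $H^\Omega$ elsewhere, with the hypothesis \eqref{eq:offdiagupperbounds} standing in for the external estimates of \cite{wang1} and \cite{d}. You have simply written out in full the details the paper leaves implicit, and your two ``points to check'' are resolved correctly.
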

\begin{proof} Apply the same method, with a partition of unity on $\Omega$ consisting of just two components, one cutoff function for $\Omega_0$ where we use the model heat kernel $H^S$, and one cutoff function for the rest of $\Omega$ where we use $H^{\Omega}$. The result follows. \end{proof}

\begin{remark} The bounds \eqref{eq:offdiagupperbounds} are satisfied, for example, by Neumann heat kernels on compact, convex domains with no smoothness assumptions on the boundary \cite{wang1},  as well as by both Dirichlet and Neumann heat kernels on sectors, half-spaces, and Euclidean space.
\end{remark}

\subsection{Locality for Robin boundary condition}
In this section, we ask when locality results similar to those of Theorem \ref{thml1} hold for the Robin problem. The answer is that in many cases they may be deduced from locality of the Neumann heat kernels.   We consider a generalization of the classical Robin boundary condition \eqref{rbc1}
\begin{equation}\label{Schredinger_eq}
\frac{\partial}{\partial n}u(x)+c(x)u(x)=0, \qquad x\in \partial D.
\end{equation}

In the first version of the locality principle, to simplify the proof, we shall assume that $\Omega \subset S \subset \R^n$, and that $\Omega$ is bounded.  We note, however, that both of these assumption may be removed in the Corollary to the theorem.   The statement of the theorem may appear somewhat technical, so we explain the geometric interpretations of the assumptions.  Conditions (1) and (2) below are clear; they are required to apply our references \cite{wang1} and \cite{d}.  Items (3), (4), and (5) mean that the (possibly unbounded) domain, $D=S$ (and as in the corollary, in which $\Omega$ may be unbounded, $D=\Omega)$ has boundary which does not oscillate too wildly or ``bunch up'' and become space-filling at infinity.  These assumptions are immediately satisfied when the domains are bounded or if the boundary consists of finitely many straight pieces (like a sector in $\R^2$, for example). 

\begin{theorem}[Locality Principle for Robin Boundary Condition, v.1]  \label{loc_pr_for_R} 
Assume that $\Omega$ and $S$ are exact geometric matches on $\Omega_0$, as in Definition \ref{exactmatch}, with $\Omega _0 \subset \Omega \subset S \subset \mathbb R^n$. Assume that $\Omega$ is bounded. Let $K^S(t,x,y)$, $K^{\Omega}(t,x,y)$ be the heat kernels for the Robin Laplacian with boundary condition  \eqref{Schredinger_eq} for $D=S$ and $D=\Omega$, respectively, for the same $c(x) \in \cL^{\infty}(\partial S\cup \partial \Omega)$.  Let $\alpha:=\textrm{dist}(\Omega_0,S\setminus\Omega)$, and note that $\alpha>0$ by our assumption of an exact geometric match. Define the auxiliary domain
\[W:=\{x\in\Omega:d(x,\Omega_0)\leq \alpha/2\}.\]
We make the following (very mild) geometric assumptions:

\begin{enumerate}
\item Both $S$ and $\Omega$ satisfy the $(\eps,\delta)$-cone condition; 
\item Both $S$ and $\Omega$ have piecewise $\cC^3$ boundaries, and there exists a constant $\sigma \in \R$ such that the second fundamental form satisfies $\sff \geq - \sigma$ on both $\pa S$ and $\pa \Omega$.  
\item For any sufficiently small $r>0$ and any $t>0$, we have 
\begin{equation}\label{geom_of_S}
\sup_{x \in W}\int_{0}^{t}\int_{\partial S\setminus B(x,r)}\frac{1}{s^{\frac{n}{2}}}e^{-\frac{|x-z|^2}{s}}\sigma(dz)ds<\infty;
\end{equation}
\item For all $r>0$ and all $x\in\mathbb R^n$, and both $D=S$ and $D=\Omega$, there is a constant $C_D$ such that
\begin{equation}\label{geom_of_S2} \mathcal H^{n-1}(\partial D\cap (B(x,r))\leq C_D\textrm{Vol}_{n-1}(B_{n-1}(x,r)),
\end{equation}
where $\cH^{n-1}$ denotes the $n-1$ dimensional Hausdorff measure;
\item	If $G_n(x,y)$ is the free Green's function on $\mathbb R^n$, we have 
\begin{equation}\label{geom_of_Omega}
	\sup_{x\in W}\int_{\partial \Omega}G_n(x,y)\sigma(dy)<\infty.
\end{equation}
\end{enumerate}

Then, uniformly on $\overline{\Omega}_0\times \overline{\Omega}_0$, we have Robin locality:
	\begin{equation*}
	|K^S(t,x,y)-K^{\Omega}(t,x,y)|=O(t^{\infty}), \qquad t\rightarrow 0.
	\end{equation*}.  
\end{theorem}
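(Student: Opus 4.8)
The plan is to reduce Robin locality to Neumann locality via a Duhamel-type perturbation formula, treating the Robin boundary condition as a boundary "potential" perturbation of the Neumann problem. First I would recall that the Robin heat kernel $K^D$ and the Neumann heat kernel $H^D$ on a domain $D$ are related by the integral equation
\[
K^D(t,x,y) = H^D(t,x,y) - \int_0^t \int_{\pa D} H^D(t-s,x,z)\, c(z)\, K^D(s,z,y)\, \sigma(dz)\, ds,
\]
which follows from Duhamel's principle together with the fact that the two kernels satisfy the same heat equation in the interior and differ only in their boundary behaviour. Iterating this identity expresses $K^D$ as a Neumann series in the boundary operator whose kernel is $H^D(t-s,x,z)c(z)$, integrated against surface measure; assumptions (3), (4), and (5), together with the Gaussian bound on $H^D$ coming from Theorem \ref{thml1} (applied with $S$ and $\Omega$, whose hypotheses (1) and (2) guarantee), ensure that this series converges and that each term is controlled. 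This is where conditions (3)--(5) do their work: \eqref{geom_of_S} and \eqref{geom_of_S2} bound the boundary integrals of the Gaussian and guarantee the single-layer operator is bounded, while \eqref{geom_of_Omega} handles the $t$-independent Green's function estimate needed for uniformity on $W$.

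Next I would set up the comparison. Since $\Omega \subset S$ and they are exact geometric matches on $\Omega_0$, the auxiliary domain $W = \{x \in \Omega : d(x,\Omega_0) \le \alpha/2\}$ sits at distance $\ge \alpha/2$ from $S \setminus \Omega$ (and hence from the part of $\pa S$ not shared with $\pa \Omega$), and $\Omega_0$ sits at distance $\ge \alpha/2$ from $\pa W \cap \Omega$. By the Neumann locality principle (Theorem \ref{thml1}), $|H^S(t,x,y) - H^\Omega(t,x,y)| = O(t^\infty)$ uniformly for $x,y \in W$. Now I would subtract the two Duhamel series term by term. In each term one has a product of factors of the form $H^D(t_i - t_{i+1}, z_i, z_{i+1}) c(z_{i+1})$ integrated over boundary points $z_1, \dots, z_k \in \pa D$ and over the time simplex. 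The key observation is that any boundary point $z_i$ lying in $\pa S \setminus \pa \Omega$ is at distance $\ge \alpha/2$ from $W \ni x$; so the first factor $H^S(t - t_1, x, z_1)$ is already off-diagonal and contributes $O(t^\infty)$ by the Gaussian bound \eqref{est_for_PC_Pomega}. The only terms that are \emph{not} automatically negligible are those in which every $z_i$ lies in the common boundary $\pa\Omega \cap \pa S$; for those, I replace each $H^S$ factor by the corresponding $H^\Omega$ factor, picking up an error $O(t^\infty)$ from Neumann locality at each step (a telescoping estimate, using that $c \in \cL^\infty$ and that the remaining integrals are uniformly bounded by (3)--(5)), and the resulting all-$H^\Omega$ term is exactly the corresponding term in the Duhamel series for $K^\Omega$. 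Summing over $k$ — the number of terms at level $k$ grows at most like $C^k t^k / k!$ by the same boundedness estimates — gives $|K^S - K^\Omega| = O(t^\infty)$ uniformly on $\overline\Omega_0 \times \overline\Omega_0$.

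The main obstacle I expect is making the term-by-term comparison uniform in $k$ and genuinely uniform on $\overline\Omega_0 \times \overline\Omega_0$ up to and including the boundary. Two issues combine here. First, one must show the Duhamel series and its termwise difference converge absolutely with a remainder that is still $O(t^\infty)$; this requires a quantitative iteration estimate (in the spirit of the Neumann series argument at the end of the proof of Theorem \ref{thml1}, and Lemmas 13--14 of \cite{sherconicdegen}) showing that the level-$k$ term is bounded by something summable in $k$ with a prefactor that is $O(t^N)$ for every $N$. The boundedness of the single-layer potential — precisely what \eqref{geom_of_S}, \eqref{geom_of_S2}, \eqref{geom_of_Omega} are there to provide — is what keeps the constants from blowing up. Second, because we want locality \emph{up to the boundary} $\pa\Omega_0$, one cannot simply invoke interior Gaussian bounds; one needs the full strength of the Neumann Gaussian upper bounds \eqref{est_for_PC_Pomega} on $(0,T]\times D \times D$ (valid by the $(\eps,\delta)$-cone condition) and of Theorem \ref{thml1}'s uniformity on $\Omega_0$, applied here on the slightly larger set $W$ so that there is room to cut off. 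Once these two points are handled, assembling the proof is routine bookkeeping.
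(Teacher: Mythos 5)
Your overall strategy---writing the Robin heat kernel as a Duhamel/Neumann series of boundary single-layer iterates of the Neumann heat kernel, then comparing the two series term by term using Neumann locality (Theorem \ref{thml1}) on the shared boundary and off-diagonal Gaussian decay \eqref{est_for_PC_Pomega} on $\pa S\setminus\pa\Omega$---is exactly the paper's route. But one step, as stated, fails. You claim that for terms in which every intermediate point $z_i$ lies in $\pa\Omega\cap\pa S$, you may ``replace each $H^S$ factor by the corresponding $H^\Omega$ factor, picking up an error $O(t^\infty)$ from Neumann locality at each step.'' Theorem \ref{thml1} controls $|H^S-H^\Omega|$ only when both spatial arguments lie in $\Omega_0$ (or, after enlarging, in $W$). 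The common boundary typically extends far beyond $W$: in the model case of a polygon matched to a sector at one corner, points near the far ends of the two shared edges are arbitrarily close to where $\pa\Omega$ and $\pa S$ diverge, there is no exact geometric match in any neighborhood of such a point, and $|H^S(s,z_i,z_{i+1})-H^\Omega(s,z_i,z_{i+1})|$ is genuinely \emph{not} $O(s^\infty)$ there. The chains through such points are still harmless, but for a different reason: they are anchored at $y\in\overline\Omega_0$, so some factor is evaluated at arguments a distance $\geq\alpha/2$ apart and the whole term is exponentially small. This is precisely the splitting the paper performs in Proposition \ref{p: estimate_for_kn_min_kn} (common boundary intersected with $W$, handled by the inductive locality estimate, versus $(\Omega\setminus W)\cap\pa S\cap\pa\Omega$, handled by the off-diagonal bound on the iterates $k_m^D$ from Lemma \ref{lem:offdiagfix}, whose proof is itself a nontrivial layer-cake argument using assumption \eqref{geom_of_S2}). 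You need this second mechanism; locality does not cover the whole common boundary.

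The second issue is your claimed bound $C^kt^k/k!$ on the level-$k$ term. A single-layer (codimension-one) boundary perturbation does not gain a factor of $t/k$ per iteration; integrating the Gaussian bound over $\pa D\times(0,t)$ gains only about $\sqrt t$ per step, so the level-$m$ term is bounded by $(2A)^{m}$ with $A$ the supremum of the boundary integral $A(t,x)$ of $|H^D c|$ (the paper's Proposition \ref{prop:estforkn}). Convergence of the difference series therefore requires \emph{smallness}, not mere boundedness, of this single-layer integral: the paper first proves uniform boundedness of $A(t,x)$ on $(0,1]\times W$ (Lemma \ref{lemma_A}, which is where assumptions \eqref{geom_of_S}--\eqref{geom_of_Omega} are consumed) and then upgrades it to $A(t,x)\to0$ uniformly as $t\to0$ via monotonicity and Dini's theorem (Corollary \ref{cor:notjustboundedbutshrinking}), choosing $T$ so that $A<1/4$. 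Your appeal to ``boundedness of the single-layer potential'' alone does not yield a convergent series, let alone a factorial one; with the corrected geometric bound and the smallness step, however, your argument assembles into essentially the paper's proof.
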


The assumptions that $\Omega\subset S$ and that $\Omega$ is bounded can both be removed:

\begin{corollary}[Locality Principle for Robin Boundary Condition] Suppose we have an exact geometric match between $\Omega$ and $S$ on the bounded domain $\Omega_0$, and the Robin coefficient $c(x)$ agrees on a common open, bounded neighborhood $\Omega_c$ of $\Omega_0$ in $\Omega$ and $S$. Then, as long as Theorem \ref{loc_pr_for_R} holds for the pairs $(\Omega_0,\Omega)$ and $(\Omega_0,S)$, the conclusion of Theorem \ref{loc_pr_for_R} holds for the pair $(\Omega,S)$.
\end{corollary}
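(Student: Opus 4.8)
The plan is to interpose the common neighborhood $\Omega_c$ as an intermediate domain and conclude by the triangle inequality, in the same spirit as the proof of the Dirichlet corollary above. Let $\Omega_c$ denote the common bounded neighborhood of $\Omega_0$ supplied by the hypothesis, shrunk slightly if necessary so that it has piecewise $\cC^3$ boundary and compactly contains $\Omega_0$; this is harmless because the exact match and the agreement of the Robin coefficients hold on open sets and the conclusion concerns only $\overline{\Omega}_0$. Being a subdomain of $\Omega$ that compactly contains $\Omega_0$, $\Omega_c$ is an exact geometric match with $\Omega$ on $\Omega_0$ in the sense of Definition \ref{exactmatch} (take the witness subdomain to be $\Omega_c$ itself); likewise $\Omega_c$ is an exact geometric match with $S$ on $\Omega_0$. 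I would equip $\Omega_c$ with the Robin boundary condition \eqref{Schredinger_eq} whose coefficient is the given $c(x)$ on the ``true'' part $\partial\Omega_c\cap\partial\Omega=\partial\Omega_c\cap\partial S$ of its boundary — these coincide under the matching isometry, by hypothesis — and is, say, identically $0$ on the remaining ``artificial'' part of $\partial\Omega_c$ produced by the cut. This determines one well-defined Robin heat kernel $K^{\Omega_c}(t,x,y)$.

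Next I would invoke Theorem \ref{loc_pr_for_R} twice, each time with $\Omega_c$ in the role of the bounded inner domain. Taking the outer domain to be $\Omega$, so that $\Omega_0\subset\Omega_c\subset\Omega$ with $\Omega_c$ bounded, and using the standing hypothesis of the Corollary that the theorem applies to the pair $(\Omega_0,\Omega)$ (that is, hypotheses (1)--(5) hold for $\Omega_c$ and $\Omega$), we obtain
\[
|K^{\Omega_c}(t,x,y)-K^{\Omega}(t,x,y)|=O(t^\infty),\qquad t\downarrow 0,
\]
uniformly on $\overline{\Omega}_0\times\overline{\Omega}_0$. Taking instead the outer domain to be $S$, the analogous hypothesis for the pair $(\Omega_0,S)$ yields
\[
|K^{\Omega_c}(t,x,y)-K^{S}(t,x,y)|=O(t^\infty),\qquad t\downarrow 0,
\]
again uniformly on $\overline{\Omega}_0\times\overline{\Omega}_0$. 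The decisive point is that $\Omega_c$ together with its Robin coefficient — the arbitrary choice on the artificial boundary included — is literally the same object in both applications, so $K^{\Omega_c}$ refers to one and the same kernel, and the two estimates combine by the triangle inequality to give
\[
|K^{\Omega}(t,x,y)-K^{S}(t,x,y)|=O(t^\infty),\qquad t\downarrow 0,
\]
uniformly on $\overline{\Omega}_0\times\overline{\Omega}_0$, which is the assertion of the Corollary. At no point is $\Omega\subset S$ or boundedness of $\Omega$ used: those hypotheses of Theorem \ref{loc_pr_for_R} only ever served to secure a bounded domain nested between $\Omega_0$ and one of the two domains, and that role is now played by $\Omega_c$.

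The main obstacle is bookkeeping rather than analysis. One has to check that $\Omega_c$ really can be chosen — after at most the harmless shrinking mentioned above — so as to satisfy hypotheses (1)--(2) of Theorem \ref{loc_pr_for_R} on its own, namely the $(\eps,\delta)$-cone condition and a piecewise $\cC^3$ boundary with $\sff\geq-\sigma$; hypotheses (3)--(5) then come for free, since $\Omega_c$ is bounded, as already observed for bounded domains in the discussion preceding the theorem. One also has to fix the extension of $c$ to the artificial boundary once and for all, so that the two invocations of the theorem genuinely produce the same $K^{\Omega_c}$; otherwise the triangle inequality would be comparing two different kernels. Since $\Omega_c$ may be taken geometrically tame — for instance with piecewise smooth boundary agreeing with $\partial\Omega$ near the relevant boundary piece of $\Omega_0$ — this is routine, and in any case it is exactly what is subsumed by the Corollary's hypothesis that Theorem \ref{loc_pr_for_R} holds for the pairs $(\Omega_0,\Omega)$ and $(\Omega_0,S)$.
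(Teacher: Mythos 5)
Your argument is correct and is essentially the paper's own proof: the paper likewise applies Theorem \ref{loc_pr_for_R} twice, once for each of the pairs $(\Omega_0,\Omega)$ and $(\Omega_0,S)$ with the same intermediate bounded domain and the same $W$, and concludes by the triangle inequality. You simply make explicit what the paper leaves implicit, namely that the intermediate domain is $\Omega_c$ and that the Robin coefficient on its artificial boundary must be fixed once and for all so that both applications produce the same kernel $K^{\Omega_c}$.
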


\begin{proof} Apply Theorem \ref{loc_pr_for_R} to the pairs $(\Omega_0,\Omega)$ and $(\Omega_0,S)$, using the same $W$, then use the triangle inequality. \qed
\end{proof}

Before we prove Theorem \ref{loc_pr_for_R}, we discuss the geometric assumptions \eqref{geom_of_S}, \eqref{geom_of_S2}, and \eqref{geom_of_Omega}, and give some sufficient conditions for them to hold.  First, observe that regardless of what $W$ is, \eqref{geom_of_S} is immediately valid if $S$ is a bounded domain whose boundary has finite one dimensional Lebesgue measure. It is also valid if $S$ is an infinite circular sector, by a direct computation, part of which is presented below.  

\begin{example}
	Let $S=S_{\gamma}\subset\mathbb{R}^2$ be a circular sector of opening angle $\gamma$ and infinite radius. Assume that $W$ and $\Omega$ are bounded domains such that $W\subset \Omega \subset S$,  and assume for simplicity that $W$ contains the corner of $S$, see figure \ref{fig2} (the case where this does not happen is similar.) Then \eqref{geom_of_S} holds. Indeed, let $r\in (0,\alpha/2)$ and $t>0$, then
	\begin{equation*}
	\begin{gathered}
	\sup_{x\in W}\int_{0}^{t}\int_{\partial S\setminus B(x,r)}\frac{1}{s}e^{-\frac{|x-z|^2}{s}}\sigma(dz)ds\\
	\leq
	\sup_{x\in W}\int_{0}^{t}\int_{\partial S\setminus \partial\Omega}\frac{1}{s}e^{-\frac{|x-z|^2}{s}}\sigma(dz)ds+\sup_{x\in W}\int_{0}^{t}\int_{(\partial S\cap \partial\Omega)\setminus B(x,r)}\frac{1}{s}e^{-\frac{|x-z|^2}{s}}\sigma(dz)ds\\
	\leq 2\int_{0}^{t}\int_{0}^{\infty}\frac{1}{s}e^{-\frac{\tau^2}{s}}d\tau ds+
	\int_{0}^{t}\frac{1}{s}e^{-\frac{r^2}{s}}\int_{(\partial S\cap \partial\Omega)}\sigma(dz)ds<\infty
	\end{gathered}
	\end{equation*}
	and
	\begin{equation*}
	\sup_{x\in W}\int_{\partial \Omega\cap B(x,r)}|\ln|x-z||\sigma(dz)
	\leq \int_{0}^{\alpha}|\ln \tau|d\tau<\infty.
	\end{equation*}
\end{example}

As for \eqref{geom_of_S2}, this is automatic if $D$ is a bounded domain with piecewise $\cC^1$ boundary. It is also true if $D$ is a circular sector (in fact here $C_D=2$).

The condition \eqref{geom_of_Omega} is also easy to satisfy:
\begin{proposition}\label{cond_for_geom_of_Omega}
	Assume that $\Omega$ is a bounded domain in $\R^n$ which has piecewise $\cC^3$ boundary. Let $W\subset \Omega$ be a compact set, then \eqref{geom_of_Omega} holds.
\end{proposition}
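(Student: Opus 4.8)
The plan is to reduce the statement to two elementary facts about $\partial\Omega$, both consequences of the hypothesis that, $\Omega$ being a bounded domain with piecewise $\cC^3$ boundary, $\partial\Omega$ is a finite union of compact pieces of $\cC^3$ (in particular $\cC^1$) hypersurfaces: first, $\partial\Omega$ has finite total surface measure $\sigma(\partial\Omega)=\mathcal{H}^{n-1}(\partial\Omega)<\infty$; and second, $\partial\Omega$ is Ahlfors upper $(n-1)$-regular, i.e. there are $C_0>0$ and $\rho_0>0$ with $\sigma(\partial\Omega\cap B(x,r))\le C_0 r^{n-1}$ for all $x\in\R^n$ and $0<r\le\rho_0$. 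To obtain these, after a rotation each of the finitely many boundary pieces is the graph of a function over a bounded region of a coordinate hyperplane; by compactness these graphs have a common bound on their gradients, so the intersection of such a graph with $B(x,r)$ projects into an $(n-1)$-dimensional ball of radius $r$ and hence has $(n-1)$-measure at most a constant (depending only on the common Lipschitz bound) times $r^{n-1}$, and it is in any case at most the finite measure of the whole piece. Summing over the pieces gives the two facts; this is exactly the observation already recorded in the text for condition \eqref{geom_of_S2}. The one point that requires care here is extracting a single Lipschitz constant and threshold $\rho_0$ valid simultaneously at every boundary point -- this is where the compactness of $\partial\Omega$ and the $\cC^1$ graph structure of its pieces are essential -- and I regard this as the only real obstacle in the argument.

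Granting these two facts, recall the explicit free Green's function: $G_n(x,y)=c_n|x-y|^{2-n}$ with $c_n>0$ for $n\ge 3$, and $G_2(x,y)=\tfrac1{2\pi}\ln\tfrac1{|x-y|}$ for $n=2$. Write $g_n(r)$ for $G_n$ viewed as a function of $r=|x-y|$; then $G_n(x,y)\le g_n(|x-y|)$ with $g_n$ decreasing and $g_n(r)r^{n-2}$ integrable near $r=0$ (for $n=2$ I replace $G_2$ by its positive part, which is harmless since $\partial\Omega$ and $W$ are bounded, so the integrand is bounded below and only its behaviour near the diagonal matters). I then decompose $\partial\Omega$ dyadically: the shells $A_k:=\{y\in\partial\Omega: 2^{-k-1}\rho_0<|x-y|\le 2^{-k}\rho_0\}$, $k\ge 0$, together with $\{y\in\partial\Omega:|x-y|>\rho_0\}$. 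On the last set $G_n(x,y)\le g_n(\rho_0)$, contributing at most $g_n(\rho_0)\,\sigma(\partial\Omega)$, finite and independent of $x$. On $A_k$, $G_n(x,y)\le g_n(2^{-k-1}\rho_0)$ while $\sigma(A_k)\le\sigma(\partial\Omega\cap B(x,2^{-k}\rho_0))\le C_0(2^{-k}\rho_0)^{n-1}$; multiplying and summing over $k$ gives a convergent geometric series for $n\ge 3$ and a convergent series of type $\sum_k(k+1)2^{-k}$ for $n=2$, with a bound depending only on $n$, $c_n$, $C_0$ and $\rho_0$. Hence $\sup_{x\in\R^n}\int_{\partial\Omega}G_n(x,y)\,\sigma(dy)<\infty$, which is far more than \eqref{geom_of_Omega} requires.

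For the Proposition exactly as stated the argument is in fact immediate: a compact $W\subset\Omega$ has $d_0:=\dist(W,\partial\Omega)>0$ since $\Omega$ is open, so $G_n(x,y)\le g_n(d_0)$ for all $x\in W$, $y\in\partial\Omega$, and thus $\int_{\partial\Omega}G_n(x,y)\,\sigma(dy)\le g_n(d_0)\,\sigma(\partial\Omega)<\infty$ uniformly for $x\in W$. I would nonetheless record the dyadic estimate as well, because the set $W$ to which this Proposition is applied in the proof of Theorem \ref{loc_pr_for_R}, namely $\{x\in\Omega:d(x,\Omega_0)\le\alpha/2\}$, need not lie at a positive distance from $\partial\Omega$ -- only be bounded -- so the uniform bound over all bounded sets is the version actually needed downstream.
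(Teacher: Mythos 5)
Your proof is correct, but it takes a genuinely different (and quantitatively stronger) route than the paper's. The paper argues softly: since $W$ is compact, it suffices to show that $x\mapsto\int_{\partial\Omega}G_n(x,y)\,\sigma(dy)$ is continuous on $W$, which is done by splitting off $\partial\Omega\cap B(x,2\delta)$ (made small using local $\cL^1$-integrability of $G_n$ on the piecewise $\cC^3$ boundary) and applying dominated convergence on the remainder; boundedness then follows from continuity on a compact set. You instead prove a uniform quantitative bound via upper Ahlfors regularity of $\partial\Omega$ together with a dyadic shell decomposition, and your series computations for $n\ge 3$ and $n=2$ check out. What your approach buys is twofold: the bound holds uniformly over any \emph{bounded} set of base points with no compactness or positive-distance hypothesis, and the dyadic estimate is precisely the uniform-in-$x$ local integrability that the paper's choice of a single $\delta$ working simultaneously for $x$ and all large $x_j$ tacitly requires, so your argument is in a sense more self-contained at the one delicate point. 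Your two closing observations are both apt and worth recording: as literally stated ($W$ compact inside the open set $\Omega$, hence $\dist(W,\partial\Omega)>0$) the proposition admits the trivial one-line proof you give, whereas the set $W=\{x\in\Omega:d(x,\Omega_0)\le\alpha/2\}$ used in Theorem \ref{loc_pr_for_R} can approach $\partial\Omega$, so the uniform version is the one actually needed downstream (the paper's continuity argument also covers this, since it never uses positive distance to the boundary). One small correction: because the paper takes $G_2(x,y)=|\ln|x-y||$, which grows as $|x-y|\to\infty$, your final supremum over all of $\R^n$ is not literally correct for $n=2$; it should be a supremum over bounded sets of $x$, or one should replace $G_2$ by its positive part as you indicate --- harmless for the application, but the statement as written needs that qualifier.
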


\begin{proof}Recall that
\begin{equation*}
	G_n(x,y)=
	\begin{cases}
	|\ln|x-y||, & \text{if } n=2;\\
	|x-y|^{2-n}, & \text{if } n\geq 3.\\
	\end{cases}
	\end{equation*}
	Since $W$ is compact, it is enough to prove that
	\begin{equation}\label{geom_of_Omega1}
	x\mapsto \int_{\partial \Omega}G_n(x,y)\sigma(dy)
	\end{equation}
	is a continuous function on $W$.
	
	Fix $x\in W$. Let $\varepsilon>0$ and $\{x_j\}_{j=1}^{\infty}\subset W$ be a sequence such that $x_j\rightarrow x$. Since $\partial \Omega$ is piecewise $\cC^3$, and $G_n(x,y)$ is in $\cL^1 _{loc}$, we can choose $\delta>0$ such that
	\begin{equation}\label{geom_of_Omega2}
	\int_{\partial \Omega\cap B(x, 2\delta)}G_n(x,y)\sigma(dy)<\varepsilon,
	\qquad
	\int_{\partial \Omega\cap B(x, 2 \delta)}G_n(x_j,y)\sigma(dy)<\varepsilon,
	\end{equation}
	for sufficiently large $j\in \mathbb{N}$, such that for these $j$ we also have $|x-x_j| < \delta$.  
	To see this, we note that $G_n (x, y) = G_n (|x-y|) = G_n(r)$, where $r = |x-y|$, and similarly, $G_n (x_j, y) = G_n (r_j)$ with $r_j = |x_j - y|$.  Thus, choosing the radius, $2 \delta$, sufficiently small, since $G_n$ is locally $\cL^1(\partial\Omega)$ integrable, and $\pa \Omega$ is piecewise $\cC^3$, we can make the above integrals as small as we like.  
	
	Now, we note that $G_n(x_j,y)\rightarrow G_n(x,y)$ as $j\rightarrow\infty$, for $y\in \partial \Omega\setminus B(x, 2 \delta)$.  Moreover, since $\Omega$ and thus $\pa \Omega$ are both compact, $G_n(x_j,y)<C=C(\delta)$ for $y \in \partial \Omega \setminus B(x, 2\delta)$.  The Dominated Convergence Theorem therefore implies
	\begin{equation*}
	\left|\int_{\partial \Omega\setminus B(x, 2 \delta)}G_n(x,y)-G_n(x_j,y)\sigma(dy)\right|<\varepsilon
	\end{equation*}
	for sufficiently large $j\in \mathbb{N}$. This, together with \eqref{geom_of_Omega2}, implies that the function \eqref{geom_of_Omega1} is continuous on $W$.
\end{proof}

In summary, we have 
\begin{corollary} The locality principle, Theorem \ref{loc_pr_for_R}, holds in the case where $\Omega$ is a bounded domain in $\mathbb R^n$ with piecewise $\cC^3$ boundary, and $S$ is any domain with piecewise $\cC^3$ boundary such that $\Omega$ and $S$ are an exact geometric match on the bounded subdomain $\Omega_0$ as in Definition \ref{exactmatch}.  Moreover, we assume that:  
\begin{enumerate} 
\item Both $S$ and $\Omega$ satisfy the $(\eps,\delta)$-cone condition; 
\item There exists a constant $\sigma \in \R$ such that the second fundamental form satisfies $\sff \geq - \sigma$ on both $\pa S$ and $\pa \Omega$; 
\item $S$ satisfies \eqref{geom_of_S} and \eqref{geom_of_S2}; 
\item The Robin coefficient $c(x)$ agrees on a common open bounded neighborhood $\Omega_c$ in $\Omega$ and $S$.  
\end{enumerate} 
\end{corollary}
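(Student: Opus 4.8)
The plan is to obtain this from the Corollary following Theorem~\ref{loc_pr_for_R}: that Corollary reduces the locality statement for the pair $(\Omega,S)$ to verifying that the hypotheses of Theorem~\ref{loc_pr_for_R} are met for the two ``one-sided'' pairs $(\Omega_0,\Omega)$ and $(\Omega_0,S)$, after which the triangle inequality in its proof finishes the argument. So essentially all of the work is in checking those hypotheses, with the common neighborhood $\Omega_c$ of $\Omega_0$ (furnished by Definition~\ref{exactmatch} and carrying, by hypothesis~(4), the common Robin coefficient) serving as the bounded intermediate domain in both applications. Before doing that, I would replace $\Omega_c$ by a slightly smaller open set -- still compactly containing $\Omega_0$, still isometrically sitting inside both $\Omega$ and $S$, still carrying the common Robin coefficient -- which is bounded and whose boundary is piecewise $\cC^3$ (in fact piecewise $\cC^\infty$), consisting of pieces of $\pa\Omega$ (equivalently $\pa S$) together with a smooth capping hypersurface, lying in the interiors of $\Omega$ and $S$, along which $\Omega_c$ is locally convex. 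For this $\Omega_c$ one has $\sff\geq-\sigma$ on all of $\pa\Omega_c$ -- by hypothesis~(2) on the $\pa\Omega$/$\pa S$ portions, and $\sff\geq 0$ on the capping portions -- and $\Omega_c$, being a bounded Lipschitz domain with no outward cusp, satisfies an interior $(\eps,\delta)$-cone condition.

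With this $\Omega_c$ fixed, I would run the two applications of Theorem~\ref{loc_pr_for_R} in parallel; they differ only in the ``large'' domain, which is $\Omega$ in one and $S$ in the other, the ``small'' bounded domain being $\Omega_c$ in both. Conditions~(1) and~(2) of Theorem~\ref{loc_pr_for_R} hold for $\Omega$ and for $S$ by the present hypotheses~(1) and~(2), and for $\Omega_c$ by the construction above. Condition \eqref{geom_of_S2} for $\pa\Omega_c$ is automatic since $\Omega_c$ is bounded with piecewise $\cC^1$ boundary, and condition \eqref{geom_of_Omega} for $\pa\Omega_c$ is precisely Proposition~\ref{cond_for_geom_of_Omega} applied to the bounded, piecewise $\cC^3$ domain $\Omega_c$, with $W$ the compact set occurring in the statement of Theorem~\ref{loc_pr_for_R}. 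It remains to supply conditions \eqref{geom_of_S} and \eqref{geom_of_S2} for the large domain: in the $(\Omega_0,S)$ application these are exactly hypothesis~(3), while in the $(\Omega_0,\Omega)$ application they are automatic because $\Omega$ is bounded with piecewise $\cC^1$ boundary, hence of finite $(n-1)$-dimensional measure -- on the region of integration in \eqref{geom_of_S} one has $|x-z|\geq r$, so that expression is bounded uniformly in $x$ by $\big(\int_0^t s^{-n/2}e^{-r^2/s}\,ds\big)\,\cH^{n-1}(\pa\Omega)<\infty$, as observed in the text just after Theorem~\ref{loc_pr_for_R}. All five hypotheses of Theorem~\ref{loc_pr_for_R} therefore hold in both applications, so that Theorem~\ref{loc_pr_for_R} ``holds for the pairs $(\Omega_0,\Omega)$ and $(\Omega_0,S)$'' in the sense required, and the Corollary following it then yields $|K^S(t,x,y)-K^{\Omega}(t,x,y)|=O(t^\infty)$ uniformly on $\overline{\Omega}_0\times\overline{\Omega}_0$ as $t\to 0$, which is the claim.

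I expect the single step that is not pure bookkeeping to be the refinement of $\Omega_c$ in the first paragraph, namely verifying that the common geometric piece can really be chosen bounded, uniformly cone-admissible, and piecewise $\cC^3$ with second fundamental form bounded below. The delicate point there is attaching the capping hypersurface to $\pa\Omega$ (equivalently $\pa S$) near those boundary points of $\Omega_0$ that lie on $\pa\Omega$ -- for instance along the two edges meeting at a polygon vertex, as for the triangular $\Omega_0$ of Figure~\ref{fig2} -- in such a way that no outward cusp is created; this is exactly where the positive distance built into ``compact containment'' is needed. Once that construction is in hand, everything else is assembled from results already available: the reduction to the two one-sided comparisons is the Corollary following Theorem~\ref{loc_pr_for_R}, condition \eqref{geom_of_Omega} is Proposition~\ref{cond_for_geom_of_Omega}, and conditions \eqref{geom_of_S} and \eqref{geom_of_S2} for bounded domains are the elementary observations recorded immediately after the statement of Theorem~\ref{loc_pr_for_R}. \qed
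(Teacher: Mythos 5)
Your proof is correct and follows the same route the paper intends: the corollary is presented as a summary, with the hypotheses of Theorem \ref{loc_pr_for_R} for the two one-sided comparisons supplied by the preceding Example (for \eqref{geom_of_S}), the remark that \eqref{geom_of_S2} is automatic for bounded domains with piecewise $\cC^1$ boundary, and Proposition \ref{cond_for_geom_of_Omega} (for \eqref{geom_of_Omega}), after which the Corollary following Theorem \ref{loc_pr_for_R} and the triangle inequality finish the argument. The one place you go beyond the paper is in making explicit that the intermediate bounded domain $\Omega_c$ must itself satisfy the theorem's hypotheses (cone condition, piecewise $\cC^3$ boundary, lower bound on $\sff$, and \eqref{geom_of_Omega}) and sketching the capping construction that arranges this; the paper leaves that verification implicit, so this is a welcome clarification rather than a deviation.
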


\begin{remark} In particular, all assumptions are satisfied if $\Omega$ is a bounded polygonal domain in $\mathbb R^2$ and $S$ is a circular sector in $\mathbb R^2$. \end{remark}

Now we prove Theorem \ref{loc_pr_for_R}.

\begin{proof} 
Since the domains $S$ and $\Omega$ satisfy the interior cone condition, there are Gaussian upper bounds for the corresponding Neumann heat kernels as given in \eqref{est_for_PC_Pomega}, 
With this in mind, define
\begin{equation*}
\begin{gathered}
F_1(t):=\sup_{(s,x,z)\in (0,t]\times W\times (\overline{S\cap\Omega})}\left|H^S(s,x,z)-H^{\Omega}(s,x,z)\right|,\\
F_2(t):=\sup_{(s,x,z)\in (0,t]\times W\times S\setminus\Omega}\left|H^S(s,x,z)\right|,\\ 
F_3(t):=\sup_{(s,x,z)\in (0,t]\times W\times \partial\Omega\setminus \partial S}\left|H^{\Omega}(s,x,z)\right|.
\end{gathered}
\end{equation*}
It now follows from \eqref{est_for_PC_Pomega} and Theorem \ref{thml1} that
\begin{equation}\label{asympt_F}
F(t):=\max(F_1(t), F_2(t), F_3(t))=O(t^{\infty}),
\qquad
t\rightarrow 0.
\end{equation}

The reason we require the Neumann heat kernels is because, as in \cite{P,zayed}\footnote{We note that the result is stated for compact domains. However, the construction is purely formal and works as long as the series converges. Under our assumptions, we shall prove that it does.}, the Robin heat kernels, $K^S(t,x,y)$ and $K^{\Omega}(t,x,y)$, can be expressed in terms of $H^S(t,x,y)$ and $H^{\Omega}(t,x,y)$ in the following way.  Define 
\begin{equation*}
k_0^D(t,x,y)=H^D(t,x,y)
\end{equation*}
and
\begin{equation*}
k_m^{D}(t,x,y)=\int_{0}^{t}\int_{\partial D} H^D(s,x,z)c(z)k_{m-1}^D(t-s,z,y)\sigma(dz)ds
\end{equation*}
for $m\in \mathbb{N}$.  Then
\begin{equation*}
K^S(t,x,y)=\sum_{m=1}^{\infty}k_m^{S}(t,x,y),
\qquad
K^{\Omega}(t,x,y)=\sum_{m=1}^{\infty}k_m^{\Omega}(t,x,y).
\end{equation*}

Let us define the function
	\begin{equation} \label{defA} 
	\begin{gathered}
	A(t,x):=\int_{0}^{t}\int_{\partial S}\left|H^S(s,x,z)c(z)\right|\sigma(dz)ds+\int_{0}^{t}\int_{\partial\Omega}\left|H^{\Omega}(s,x,z)c(z)\right|\sigma(dz)ds\\
	=:A_1(t,x)+A_2(t,x).
	\end{gathered}
	\end{equation}
	on $(0,1]\times W$. The following lemma, in particular, shows that $A(t,x)$ is a well defined function.

\begin{lemma}\label{lemma_A}
		The function $A(t,x)$ is uniformly bounded on $(0, 1]\times W$.
	\end{lemma}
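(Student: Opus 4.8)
The plan is to bound $A_1(t,x)$ and $A_2(t,x)$ separately and uniformly over $(t,x)\in(0,1]\times W$. In each case the first step is to drop the Robin coefficient using $|c(z)|\le\|c\|_{\cL^\infty}$ and to insert the Gaussian upper bounds \eqref{est_for_PC_Pomega} for the Neumann heat kernels $H^S$ and $H^\Omega$, which hold on $(0,1]$ since \eqref{est_for_PC_Pomega} is valid on $(0,T]$ for every $T$. This reduces the lemma to showing that, for $D\in\{S,\Omega\}$,
\[\sup_{x\in W}\int_0^1\!\!\int_{\partial D}s^{-n/2}e^{-|x-z|^2/(C_2 s)}\,\sigma(dz)\,ds<\infty .\]
The key pointwise estimate, obtained by the substitution $u=|x-z|^2/(C_2 s)$, is that
\[\int_0^1 s^{-n/2}e^{-|x-z|^2/(C_2 s)}\,ds\le C\bigl(G_n(x,z)+1\bigr)\]
with $C$ independent of $x$ and $z$: for $n\ge 3$ the right-hand integral is at most $C_2^{n/2-1}\Gamma(n/2-1)\,|x-z|^{2-n}$, and for $n=2$ one is left with $\int_a^\infty u^{-1}e^{-u}\,du\le|\ln a|+e^{-1}$ where $a=|x-z|^2/C_2$, so the truncation of the time integral at $t\le 1$ produces no spurious blow-up as $t\downarrow 0$.

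For $A_2$ this already suffices. Since $\Omega$ is bounded with piecewise $\cC^3$ boundary, $\mathcal H^{n-1}(\partial\Omega)<\infty$, so Tonelli's theorem and the pointwise estimate give
\[A_2(t,x)\le C\|c\|_{\cL^\infty}\Bigl(\sup_{x\in W}\int_{\partial\Omega}G_n(x,z)\,\sigma(dz)+\mathcal H^{n-1}(\partial\Omega)\Bigr),\]
which is finite by hypothesis \eqref{geom_of_Omega} (and Proposition \ref{cond_for_geom_of_Omega} gives a sufficient condition for \eqref{geom_of_Omega} in our setting).

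For $A_1$ the domain $S$ may be unbounded, so I would fix an $r>0$ small enough that \eqref{geom_of_S} applies and split $\partial S=(\partial S\cap B(x,r))\cup(\partial S\setminus B(x,r))$. On the far piece, rescaling $s\mapsto C_2 s$ converts $\int_0^t\int_{\partial S\setminus B(x,r)}s^{-n/2}e^{-|x-z|^2/(C_2 s)}\,\sigma(dz)\,ds$ into a constant multiple of the expression in \eqref{geom_of_S} (with $t$ replaced by $t/C_2\le 1/C_2$), so it is bounded uniformly in $x\in W$ and $t\le 1$ by monotonicity in the time variable. On the near piece, Tonelli and the pointwise estimate reduce matters to bounding $\sup_x\int_{\partial S\cap B(x,r)}\bigl(G_n(x,z)+1\bigr)\,\sigma(dz)$, which I would estimate by decomposing $B(x,r)$ into the dyadic annuli $B(x,2^{-k}r)\setminus B(x,2^{-k-1}r)$: on the $k$-th annulus $G_n(x,z)+1$ is $O(2^{(n-2)k})$ when $n\ge 3$ and $O(k)$ when $n=2$, while the Ahlfors-type bound \eqref{geom_of_S2} controls $\mathcal H^{n-1}(\partial S\cap B(x,2^{-k}r))$ by $C_S\,\mathrm{Vol}_{n-1}(B_{n-1}(x,2^{-k}r))=O\bigl((2^{-k}r)^{n-1}\bigr)$; the $k$-th term is then $O(2^{-k})$ times a polynomial in $k$, so the series converges with a bound uniform in $x$. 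Adding the near and far contributions bounds $A_1$ uniformly on $(0,1]\times W$, and combining with the bound on $A_2$ yields the lemma.

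I expect the main obstacle to be the near-diagonal part of $A_1$: the free Green's function must be integrated over $\partial S\cap B(x,r)$ knowing only the measure-theoretic hypothesis \eqref{geom_of_S2} on $\partial S$ — no smoothness, no parametrization — and the dyadic bookkeeping has to be arranged so that every constant is uniform in $x\in W$. Throughout, the planar case $n=2$, where $G_2(x,z)=|\ln|x-z||$ is only logarithmically singular but grows at infinity, is the delicate one: both in checking that the time integral contributes no $|\ln t|$ as $t\downarrow 0$, and in summing the logarithmic weights over the annuli.
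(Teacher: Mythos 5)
Your argument is correct, and most of it runs parallel to the paper's: both proofs use monotonicity of $A_j(t,x)$ in $t$ to reduce to $t=1$, insert the Gaussian bounds \eqref{est_for_PC_Pomega}, perform the substitution $u=|x-z|^2/(C_2 s)$ to convert the time integral into $G_n(x,z)+O(1)$, invoke \eqref{geom_of_S} for the far part of $\partial S$, and use the finite measure of the bounded boundary $\partial\Omega$ for the far part of $A_2$. The genuine divergence is in the near-diagonal piece. The paper's key observation is that for $x\in W$ and $\rho<\alpha/2$ one has $\partial S\cap B(x,\rho)=\partial\Omega\cap B(x,\rho)$ --- this is precisely what the buffer $W$ is built for --- so the near-diagonal contributions from $\partial S$ and $\partial\Omega$ are literally the same integral, which is then finished by hypothesis \eqref{geom_of_Omega} (via Proposition \ref{cond_for_geom_of_Omega}); the Ahlfors-type bound \eqref{geom_of_S2} is not used in this lemma at all, only later in Lemma \ref{lem:offdiagfix}. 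You instead estimate $\int_{\partial S\cap B(x,r)}\bigl(G_n(x,z)+1\bigr)\,\sigma(dz)$ directly from \eqref{geom_of_S2} by dyadic annuli, and your bookkeeping is sound: the $k$-th term is $O(2^{-k})$ for $n\ge 3$ and $O(k\,2^{-k})$ for $n=2$, uniformly in $x$ since \eqref{geom_of_S2} is uniform in $x$ and $r$. Your route is marginally more general --- it needs no coincidence of the two boundaries near $W$, and applied to $D=\Omega$ it would reprove \eqref{geom_of_Omega} from upper Ahlfors regularity alone, rendering Proposition \ref{cond_for_geom_of_Omega} superfluous --- while the paper's route is shorter because the exact-match hypothesis collapses the two near-diagonal integrals into one. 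The only loose end in your write-up is the case $n=1$, where $G_n$ is not defined; the paper dispatches it in one line from \eqref{est_for_PC_Pomega}, and your time-integral estimate $\int_0^1 s^{-1/2}\,ds<\infty$ handles it just as easily.
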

	\begin{proof}
		For $n=1$ the lemma follows from \eqref{est_for_PC_Pomega}. Hence, we assume here $n\geq 2$. For any $x\in\overline W$, $A_j(t,x)$, $j=1,2$, is an increasing function with respect to the variable $t\in (0,1]$. Therefore, it is sufficient to prove that $A_j(x):=A_j(1,x)$ is bounded on $W$, for $j=1,2$. 
%
		
		Let us choose $0<\rho<\min(\alpha/2,1)$. Without loss of generality, setting $C_1=C_2=1$ in \eqref{est_for_PC_Pomega}, we obtain
		\begin{equation*}
		\begin{gathered}
		A_1(x)+A_2(x)\\
		\leq\int_{0}^{1}\int_{\partial S\setminus B(x,\rho)}s^{-\frac{n}{2}}e^{-\frac{|x-z|^2}{s}}|c(z)|\sigma(dz)ds+
		\int_{0}^{1}\int_{\partial S\cap B(x,\rho)}s^{-\frac{n}{2}}e^{-\frac{|x-z|^2}{s}}|c(z)|(dz)ds\\
		+ \int_{0}^{1}\int_{\partial \Omega\setminus B(x,\rho)}s^{-\frac{n}{2}}e^{-\frac{|x-z|^2}{s}}|c(z)|\sigma(dz)ds+
		\int_{0}^{1}\int_{\partial \Omega\cap B(x,\rho)}s^{-\frac{n}{2}}e^{-\frac{|x-z|^2}{s}}|c(z)|(dz)ds\\
		=:J_1(x)+J_2(x)+J_3(x)+J_4(x).
		\end{gathered}
		\end{equation*}
		The boundedness of $J_1(x)$ on $W$ follows from \eqref{geom_of_S}. For $J_3(x)$ we estimate using only that $\partial\Omega$ is bounded and thus, since it is piecewise $\cC^3$, has finite measure,
		\begin{equation*}
		J_3(x)\leq \|c\|_{\infty}\int_{0}^{1}\frac{1}{s^{\frac{n}{2}}}e^{-\frac{\rho^2}{s}}\int_{\partial \Omega\setminus B(x,\rho)}\sigma(dz)<\infty.
		\end{equation*}		
		Finally, since $\rho<\alpha/2$, $\partial S\cap B(x,\rho)=\partial \Omega\cap B(x,\rho)$ for $x\in W$, and hence by Fubini's theorem and a change of variables
		\begin{equation*}
		\begin{gathered}
		J_2(x)=J_4(x)=\int_{0}^{1}\int_{\partial \Omega\cap B(x,\rho)}s^{-\frac{n}{2}}e^{-\frac{|x-z|^2}{s}}|c(z)|(dz)ds\\
		\leq \|c\|_{\infty}\int_{\partial\Omega\cap B(x,\rho)}\frac{1}{|x-z|^{n-2}}\int_{|x-z|^2}^{+\infty}\tau^{\frac{n}{2}-2}e^{-\tau}d\tau \sigma(dz).
		\end{gathered}
		\end{equation*}
		For $n>2$, the second integrand is bounded, and hence, Proposition \ref{geom_of_Omega} implies that $J_2(x)$ and $J_4(x)$ are bounded on $W$. If on the other hand $n=2$, then
		\begin{equation*}
		J_2(x)=J_4(x)=\int_{\partial \Omega\cap B(x,\rho)}|c(z)|\int_{|x-z|^2}^{+\infty}\tau^{-1}e^{-\tau}d\tau \sigma(dz).
		\end{equation*}
		Since $\rho < 1$, $\rho^2<\rho<1$, so we can write
		\begin{equation*}
		\begin{gathered}
		J_2(x)=J_4(x)\leq\int_{\partial \Omega\cap B(x,\rho)}|c(z)|\int_{|x-z|^2}^{1}\tau^{-1}d\tau \sigma(dz)+\int_{\partial \Omega\cap B(x,\rho)}|c(z)|\int_{1}^{+\infty}e^{-\tau}d\tau \sigma(dz)\\
		\leq \|c\|_{\infty}\int_{\partial \Omega\cap B(x,\rho)}\left|\ln|x-z|^2\right|\sigma(dz)+\|c\|_{\infty}\int_{\partial \Omega\cap B(x,\rho)} \sigma(dz),
		\end{gathered}
		\end{equation*}
		which is finite by Proposition \ref{geom_of_Omega}, the boundedness of $\partial \Omega$ and the piecewise $\cC^3$ smoothness of the boundary.
\qed 
	\end{proof}

\begin{corollary}\label{cor:notjustboundedbutshrinking} In the notation of Lemma \ref{lemma_A}, we have
\[\lim_{T\to 0}\sup_{(t,x) \in(0,T] \times W}A(t,x)=0.\]
\end{corollary}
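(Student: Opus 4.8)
The plan is to strengthen the boundedness of Lemma~\ref{lemma_A} to decay near $t=0$ by tracking, in each estimate used there, the dependence on the upper limit of the time integral. The first step is a purely formal reduction: for fixed $x\in W$ the integrands defining $A_1(t,x)$ and $A_2(t,x)$ are nonnegative, so $t\mapsto A(t,x)=A_1(t,x)+A_2(t,x)$ is nondecreasing, and therefore
\[
\sup_{(t,x)\in(0,T]\times W}A(t,x)=\sup_{x\in W}\big(A_1(T,x)+A_2(T,x)\big).
\]
Hence it is enough to prove $\sup_{x\in W}A_j(T,x)\to 0$ as $T\to 0$ for $j=1,2$.

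Next I would fix $\rho$ with $0<\rho<\min(\alpha/2,1)$, small enough that \eqref{geom_of_S} applies with $r=\rho$; then, exactly as in the proof of Lemma~\ref{lemma_A}, for $x\in W$ one has $\partial S\cap B(x,\rho)=\partial\Omega\cap B(x,\rho)$, and using the Gaussian bounds \eqref{est_for_PC_Pomega} I would split $A_j(T,x)$ into a ``near'' part integrated over $\partial D\cap B(x,\rho)$ and a ``far'' part integrated over $\partial D\setminus B(x,\rho)$ (with $D=S$ or $D=\Omega$), mirroring the decomposition $J_1+J_2+J_3+J_4$ there. On the far part $|x-z|\ge\rho$; writing $e^{-|x-z|^2/(C_2 s)}\le e^{-\rho^2/(2C_2 s)}e^{-|x-z|^2/(2C_2 s)}$ and extracting $\sup_{s\le T}e^{-\rho^2/(2C_2 s)}=e^{-\rho^2/(2C_2 T)}$, the residual integral is controlled for $D=\Omega$ by $\sigma(\partial\Omega)<\infty$ (as $\partial\Omega$ is bounded and piecewise $\cC^3$) and for $D=S$ by \eqref{geom_of_S} after rescaling the time variable; so the far part is $O\big(e^{-\rho^2/(2C_2 T)}\big)$ uniformly in $x\in W$ and vanishes as $T\to 0$.

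The crux is the near part, where $z$ may be arbitrarily close to $x$ so that Gaussian decay is useless. Here I would use assumption \eqref{geom_of_S2}, which bounds $\sigma(\partial D\cap B(x,r))$ by a fixed multiple of $r^{n-1}$ for all $x$ and all $r>0$; combined with the layer-cake formula (equivalently, a sum over dyadic annuli centered at $x$) this gives, with a constant $C$ independent of $x\in W$ and of $s\in(0,1]$,
\[
\int_{\partial D\cap B(x,\rho)}e^{-|x-z|^2/(C_2 s)}\,\sigma(dz)\;\le\;C\,s^{(n-1)/2}.
\]
Substituting this and \eqref{est_for_PC_Pomega} into the near part bounds it by $\|c\|_\infty C_1 C\int_0^T s^{-n/2}s^{(n-1)/2}\,ds=2\|c\|_\infty C_1 C\sqrt T$, again uniformly in $x\in W$. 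Adding the two contributions, $\sup_{x\in W}(A_1(T,x)+A_2(T,x))=O(\sqrt T)+O\big(e^{-\rho^2/(2C_2 T)}\big)\to 0$, which with the reduction of the first paragraph finishes the proof.

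I expect the only genuine subtlety to be the uniformity in $x$ of this near-diagonal estimate. For each individual $x$ the convergence $A_j(T,x)\to 0$ is automatic, since $s\mapsto\int_{\partial D}|H^D(s,x,z)c(z)|\,\sigma(dz)$ is integrable on $(0,1)$ by Lemma~\ref{lemma_A}; but a bare dominated-convergence argument gives no control on the rate as $x$ ranges over $W$. The uniformity has to be produced by extracting the explicit power $s^{(n-1)/2}$ from the perimeter bound \eqref{geom_of_S2}, which turns the near-diagonal contribution into an honest $O(\sqrt T)$ valid simultaneously for all $x\in W$; locating and exploiting that power is the one place where some care is needed.
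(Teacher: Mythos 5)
Your proof is correct, but it takes a genuinely different route from the paper's. The paper's argument is soft: it observes that $A(t,x)$ is monotone increasing in $t$ and continuous in $x$, obtains pointwise convergence $A(t,x)\to 0$ as $t\to 0$ by dominated convergence (the dominating, integrable function being exactly what Lemma \ref{lemma_A} provides), and then upgrades pointwise to uniform convergence on the compact set $W$ via Dini's theorem --- thereby sidestepping, rather than solving, the uniformity issue you single out at the end, and never producing a rate. Your argument instead redoes the decomposition of Lemma \ref{lemma_A} quantitatively: the far-from-$x$ contribution is extinguished by the extracted factor $e^{-\rho^2/(2C_2T)}$, with \eqref{geom_of_S} (after rescaling time) and the finiteness of $\sigma(\partial\Omega)$ controlling the residual integrals uniformly in $x\in W$; the near-diagonal contribution is handled by the surface-measure bound \eqref{geom_of_S2}, which via layer cake converts the boundary integral of the Gaussian into $O(s^{(n-1)/2})$ and hence the time integral into $O(\sqrt{T})$. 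That perimeter/layer-cake mechanism is precisely the one the paper deploys later in Lemma \ref{lem:offdiagfix} (the ``half an order better in $t$'' remark), so your route is entirely consistent with the paper's toolbox. What it buys is an explicit rate $O(\sqrt{T})$ and the fact that Lemma \ref{lemma_A} becomes a corollary of your estimate rather than an input; what it costs is length and the use of hypothesis \eqref{geom_of_S2}, which the paper's proof of this corollary does not need. Both arguments are valid.
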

\begin{proof} Consider the functions $A(t,x)$. They are monotone increasing in $t$ for each $x$, and they are continuous in $x$ for each $t$ by continuity of solutions to the heat equation. We claim that as $t\to 0$, $A(t,x)$ approaches zero pointwise. To see this write the time integral from $0$ to $t$ in each $A_j(t,x)$, $j=1,2$, as a time integral over $[0,1]$ by multiplying the integrand by the characteristic function $\chi_{[0,t]}$. For example,
\[A_1(t,x)=\int_0^1\int_{\partial S} \chi_{[0,t]}|H^S(s,x,z)c(z)|\, \sigma(dz)ds.\]
The integrands are bounded by $|H^S(s,x,z)c(z)|$, which is integrable by Lemma \ref{lemma_A}, and for each $x$, they converge to zero as $t \to 0$. So by the Dominated Convergence Theorem applied to each $A_j(t,x)$, we see that $A(t,x)\to 0$ as $t \to 0$ for each $x$.

Now we have a monotone family of continuous functions converging pointwise to a continuous function (zero) on the compact set $W$. By Dini's theorem, this convergence is in fact uniform, which is precisely what we want.
\qed
\end{proof}

To use this, fix a small number $A$ to be chosen later. Then Corollary \ref{cor:notjustboundedbutshrinking} allows us to find $T>0$ such that
	\begin{equation}\label{eq:defofA}
	A(t,x)<A,
	\qquad
	(t,x)\in (0,T]\times W.
	\end{equation}
	Next we prove the following two auxiliary propositions.
	\begin{proposition}\label{prop:estforkn}
		The following inequality holds with $D=S$ and $D=\Omega$:
		\begin{equation}\label{estimate_for_kn}
		\begin{gathered}
		\int_{0}^{t}\int_{\partial D}\left|k_m^D(s,x,z)c(z)\right|\sigma(dz)ds\leq 2^{m+1}A^{m+1}\\
		\end{gathered}
		\end{equation}
		on $(0,T]\times W$, for any $m\in \mathbb{N}$. Moreover, an identical inequality holds when $k_m^D(s,x,z)$ is replaced by $k_m^D(s,z,x)$.
	\end{proposition}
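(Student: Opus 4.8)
Proposition \ref{prop:estforkn} is exactly the kind of statement that yields to induction on $m$, with the constant $A$ from \eqref{eq:defofA} doing all the bookkeeping. The plan is to induct on $m$, proving the bound
\[\int_{0}^{t}\int_{\partial D}\left|k_m^D(s,x,z)c(z)\right|\sigma(dz)ds\leq 2^{m+1}A^{m+1}\]
uniformly for $(t,x) \in (0,T]\times W$, and simultaneously (or by the same argument with the roles of the arguments swapped) the analogous bound for $k_m^D(s,z,x)$. The base case $m=1$ should follow almost directly from the definitions: since $k_1^D(s,x,z)=\int_0^s\int_{\partial D}H^D(\tau,x,w)c(w)H^D(s-\tau,w,z)\,\sigma(dw)d\tau$, one integrates $|k_1^D(s,x,z)c(z)|$ against $\sigma(dz)ds$, and after applying the semigroup/heat-kernel bounds \eqref{est_for_PC_Pomega} and Fubini, each of the two time integrations over the boundary contributes a factor bounded by $A(t,x)<A$ (or by a controlled multiple thereof), giving something like $A^2$ up to the factor of $2$ that the $2^{m+1}$ is designed to absorb.

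For the inductive step, I would write out
\[k_{m}^{D}(s,x,z)=\int_{0}^{s}\int_{\partial D}H^D(\tau,x,w)c(w)k_{m-1}^{D}(s-\tau,w,z)\,\sigma(dw)d\tau,\]
take absolute values, multiply by $|c(z)|$, and integrate $\sigma(dz)\,ds$ over $\partial D\times(0,t]$. The idea is to apply Fubini to do the $z$-integration (and the inner time-integration) first: the factor $\int_0^{\cdot}\int_{\partial D}|k_{m-1}^D(s-\tau,w,z)c(z)|\,\sigma(dz)\,d(s-\tau)$ is, by the inductive hypothesis applied \emph{at the point $w$ instead of $x$}, bounded by $2^m A^m$ — this is precisely why the proposition is stated uniformly over all $x\in W$, and also why the ``$k_m^D(s,z,x)$'' version is bundled into the statement, since depending on how one sets up Fubini one may need the bound with the spatial arguments in the other order. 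What remains after this is $2^m A^m \int_0^t\int_{\partial D}|H^D(\tau,x,w)c(w)|\,\sigma(dw)d\tau$, which is bounded by $2^m A^m \cdot A(t,x) < 2^m A^{m+1}$; the leftover factor of $2$ (to reach $2^{m+1}A^{m+1}$) is slack absorbed to cover the two summands $A_1,A_2$ in the definition \eqref{defA} of $A$, or any analogous splitting forced by handling the portions of $\partial D$ near and far from $x$ separately.

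The main obstacle is making the Fubini rearrangement fully rigorous, in particular ensuring that all the iterated integrals are finite so that the order of integration may legitimately be interchanged, and that the inductive hypothesis is genuinely being invoked with the correct free variable: one must check that the point $w\in\partial D$ ranges over a set on which the hypothesis applies, which requires $\partial D$ (or at least the relevant portion of it seen from $W$) to behave like $W$ does in Lemma \ref{lemma_A} — this is exactly where the geometric hypotheses \eqref{geom_of_S}, \eqref{geom_of_S2}, \eqref{geom_of_Omega} and the cone condition re-enter, since they are what guarantee $A(t,x)$ is uniformly bounded (indeed, by Corollary \ref{cor:notjustboundedbutshrinking}, small) not just for $x\in W$ but in the uniform way needed to iterate. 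A clean way to organize this is to prove the bound first for $x$ in a slightly larger set containing both $W$ and $\partial D$, so that the recursion closes; but since $\partial S$ may be unbounded one should instead observe that in the recursion only boundary points within the support of the relevant cutoffs (equivalently, within a bounded distance of $W$, controlled by $\alpha$) actually contribute, so the effective domain stays bounded. Once the finiteness and the bookkeeping of constants are pinned down, the induction itself is routine.
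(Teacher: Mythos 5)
Your overall strategy---induction on $m$, with Fubini used to factor the iterated integral into one factor controlled by the inductive hypothesis and one controlled by $A(t,x)<A$---is exactly the paper's, and your accounting of the constants is consistent with theirs (the paper's base case is $m=0$, where the bound is simply $A(t,x)<A\leq 2A$). The one substantive difference is in how the Fubini rearrangement is set up, and it matters for closing the induction. You integrate out the \emph{outer} boundary variable first, which forces you to invoke the inductive hypothesis at an intermediate boundary point $w\in\partial D$; since the proposition is only stated for $x\in W$, you then have to strengthen the statement to boundary points, and your proposed patch (that only boundary points near $W$ contribute, ``within the support of the relevant cutoffs'') does not literally apply here because the Robin series construction has no cutoffs---the intermediate point genuinely ranges over all of $\partial D$, possibly unbounded. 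The paper arranges things the other way: after changing variables, extending the inner time integral from $[0,s]$ to $[0,t]$ with $|s-\tau|$, and applying Fubini, the inductive hypothesis is applied to $\int_0^t\int_{\partial D}|k_m^D(\tau,\zeta,x)c(\zeta)|\,\sigma(d\zeta)\,d\tau$ with the fixed argument $x\in W$ in the \emph{second} slot---this is precisely why the reversed-argument clause is part of the statement---while the only quantity needed at an arbitrary boundary point $\zeta$ is the base-level integral $\int_0^t\int_{\partial D}|H^D(|s-\tau|,z,\zeta)c(z)|\,\sigma(dz)\,ds<2A$. That arrangement lets the induction close entirely within the stated proposition, at the cost of needing the $A$-type bound (not the full hierarchy of $k_m$-bounds) uniformly for $\zeta\in\partial D$. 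Finally, the factor $2^{m+1}$ is not generic slack: it arises specifically from the $|s-\tau|$ trick, where the extended time integral splits at $s=\tau$ into two pieces each bounded by $A$, contributing a factor $2A$ rather than $A$ at every step of the induction; your sketch treats this as an unspecified ``controlled multiple,'' which is the one manipulation you would need to spell out.
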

	
	\begin{proof}
		By induction. For $m=0$, recalling the definition of $A$, \eqref{defA}, 
		$$ \int_0 ^t \int_{\partial D} |k_0 ^D (s,x,z) c(z)| \sigma(dz) ds = \int_0 ^t \int_{\partial D} |H^D (s,x,z) c(z) |\sigma(dz) ds \leq A(t,x) < A.		$$
		We have thus verified the base case.  Now, we assume that \eqref{estimate_for_kn} holds for $k\leq m$. Consider $k=m+1$:
		
		\begin{equation*}
		\begin{gathered}
		\int_{0}^{t}\int_{\partial D}\left|k_{m+1}^D(s,x,z)c(z)\right|\sigma(dz)ds\\
=\int_{0}^{t}\int_{\partial D}\int_{0}^{s}\int_{\partial D}\left|H^D(\tau,z,\zeta)k_{m}^D(s-\tau,\zeta,x)c(\zeta)c(z)\right|\sigma(d\zeta) d\tau \sigma(dz) ds.
		\end{gathered}
		\end{equation*}
		Changing variables:
		\begin{equation}\label{estimate_for_kn1}
		\begin{gathered}
		\int_{0}^{t}\int_{\partial D}\int_{0}^{s}\int_{\partial D}\left|H^D(\tau,z,\zeta)k_{m}^D(s-\tau,\zeta,x)c(\zeta)c(z)\right|\sigma(d\zeta) d\tau \sigma(dz)ds\\
		\leq\int_{0}^{t}\int_{\partial D}\int_{0}^{s}\int_{\partial D}\left|H^D(s-\tau,z,\zeta)k_{m}^D(\tau,\zeta,x)c(\zeta)c(z)\right|\sigma(d\zeta) d\tau \sigma(dz)ds\\
		\leq\int_{0}^{t}\int_{\partial D}\int_{0}^{t}\int_{\partial D}\left|H^D(|s-\tau|,z,\zeta)k_{m}^D(\tau,\zeta,x)c(\zeta)c(z)\right|\sigma(d\zeta) d\tau \sigma(dz)ds\\
		\leq\int_{0}^{t}\int_{\partial D}\left(\int_{0}^{t}\int_{\partial D}\left|H^D(|s-\tau|,z,\zeta)c(z)\right|\sigma(dz)ds\right)\left|k_{m}^D(\tau,\zeta,x)c(\zeta)\right|\sigma(d\zeta) d\tau .
		\end{gathered}
		\end{equation}
		For the integrand, we compute
		\begin{equation*}
		\begin{gathered}
		\int_{0}^{t}\int_{\partial D}\left|H^D(|s-\tau|,z,\zeta)c(z)\right|\sigma(dz)ds\\
		=\int_{0}^{\tau}\int_{\partial D}\left|H^D(|s-\tau|,z,\zeta)c(z)\right|\sigma(dz)ds+\int_{\tau}^{t}\int_{\partial D}\left|H^D(|s-\tau|,z,\zeta)c(z)\right|\sigma(dz)ds\\
		=\int_{0}^{\tau}\int_{\partial D}\left|H^D(\tau-s,z,\zeta)c(z)\right|\sigma(dz)ds+\int_{0}^{t-\tau}\int_{\partial D}\left|H^D(s,z,\zeta)c(z)\right|\sigma(dz)ds<2A.
		\end{gathered}
		\end{equation*}
		Therefore, from the induction hypothesis and \eqref{estimate_for_kn1}, we obtain
		\begin{equation*}
		\begin{gathered}
		\int_{0}^{t}\int_{\partial D}\int_{0}^{s}\int_{\partial D}\left|H^D(\tau,z,\zeta)k_{m}^D(s-\tau,\zeta,x)c(\zeta)c(z)\right|\sigma(d\zeta) d\tau \sigma(dz)ds\\
		\leq 2A\int_{0}^{t}\int_{\partial D}\left|k_{m}^D(\tau,\zeta,x)c(\zeta)\right|\sigma(d\zeta) d\tau \leq 2A\cdot 2^{m+1}A^{m+1}=2^{m+2}A^{m+2},
		\end{gathered}
		\end{equation*}
as desired. 

The estimates with $x$ and $z$ reversed are proved similarly. Note in particular that the base case works because $k_0^D=H_0^D$ is a Neumann heat kernel and is thus symmetric in its spatial arguments.
\qed 
	\end{proof}

We need one more lemma concerning pointwise bounds for $k_m^D$, which uses the geometric assumption \eqref{geom_of_S2}:
\begin{lemma}\label{lem:offdiagfix} Let $D=S$ or $\Omega$. There exists $T_0>0$ such that for all $m$, all $t<T_0$, all $x\in D$, and all $y\in D$,
\[|k_m^D(t,x,y)|\leq \frac{C_1}{2^m} t^{- \frac n 2} e^{-\frac{|x-y|^2}{C_2 t}}.\]
\end{lemma}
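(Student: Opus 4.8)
The plan is to prove the bound by induction on $m$, with the base case $m=0$ being precisely the Gaussian upper bound \eqref{est_for_PC_Pomega} for the Neumann heat kernel, which also fixes the constants $C_1$ and $C_2$ (and we always take $T_0\le T$). Absolute convergence of the defining integral for each $k_m^D$ on $(0,T_0]\times D\times D$ is itself part of the induction, being an immediate consequence of the bound once we have it for $k_{m-1}^D$.

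For the inductive step I would insert $|H^D(s,x,z)|\le C_1 s^{-n/2}e^{-|x-z|^2/(C_2 s)}$ and the inductive hypothesis $|k_{m-1}^D(t-s,z,y)|\le \frac{C_1}{2^{m-1}}(t-s)^{-n/2}e^{-|z-y|^2/(C_2(t-s))}$ into the recursion defining $k_m^D$. The crucial device is the exact Chapman--Kolmogorov identity
\[\frac{|x-z|^2}{s}+\frac{|z-y|^2}{t-s}=\frac{|x-y|^2}{t}+\frac{t}{s(t-s)}\left|z-\tfrac{(t-s)x+sy}{t}\right|^2,\]
which lets us pull the \emph{full} factor $e^{-|x-y|^2/(C_2 t)}$ out of the $z$-integral, leaving behind a Gaussian in $z$ centered at the point $w:=\frac{(t-s)x+sy}{t}\in\R^n$ of width $r:=\sqrt{s(t-s)/t}$. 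Because the constant $C_2$ in the exponent is not degraded, the induction closes with one fixed $C_2$; this is the point where a naive ``split the exponent in half'' argument would fail.

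Next I would estimate the boundary integral $\int_{\partial D}e^{-|z-w|^2/(C_2 r^2)}\,\sigma(dz)$ by decomposing $\partial D$ into the dyadic annuli $B(w,(k+1)r)\setminus B(w,kr)$ and using the uniform Ahlfors upper-regularity hypothesis \eqref{geom_of_S2}, which holds for all centers $w\in\R^n$ --- exactly what is needed since $\partial D$ may be non-compact, e.g.\ the boundary of a sector. The $k$-th annulus contributes at most $C_D\omega_{n-1}\big((k+1)r\big)^{n-1}e^{-k^2/C_2}$, so the whole integral is at most $C_D' r^{n-1}$ with $C_D'$ independent of $w$ and of $s$. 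Combining everything, $|k_m^D(t,x,y)|$ is bounded by $\|c\|_\infty\frac{C_1^2 C_D'}{2^{m-1}}\,e^{-|x-y|^2/(C_2 t)}\,t^{-(n-1)/2}\int_0^t (s(t-s))^{-1/2}\,ds$, and the remaining time integral is the Beta integral $\int_0^t (s(t-s))^{-1/2}\,ds=\pi$, independent of $t$. Since $t^{-(n-1)/2}=t^{1/2}\,t^{-n/2}$, choosing $T_0$ small enough that $2\pi\|c\|_\infty C_1 C_D' T_0^{1/2}\le 1$ propagates the claimed bound $\frac{C_1}{2^m}t^{-n/2}e^{-|x-y|^2/(C_2 t)}$.

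The main obstacle is making the boundary-Gaussian estimate uniform over all $x,y\in D$ (equivalently, all $w\in\R^n$) when $\partial D$ is unbounded; this is precisely what assumption \eqref{geom_of_S2} is designed to handle, and the one place requiring care is checking that the annulus-by-annulus sum really is $O(r^{n-1})$ with a constant depending on neither the center $w$ nor the individual times $s,t-s$, only on their combination $r$. Everything else --- the Chapman--Kolmogorov identity and the Beta integral --- is an exact computation, and it is the resulting gain of the positive power $t^{1/2}$ at each step that lets the factor $2^{-m}$ be absorbed simply by shrinking $T_0$.
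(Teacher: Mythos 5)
Your proof is correct, and its skeleton coincides with the paper's: induction on $m$ with base case \eqref{est_for_PC_Pomega}, preservation of the full Gaussian factor $e^{-|x-y|^2/(C_2 t)}$ with the \emph{same} constant $C_2$ by completing the square (your Chapman--Kolmogorov identity), a gain of $t^{1/2}$ from the fact that the boundary integral contributes only $r^{n-1}$ rather than $r^n$, the Beta integral $\int_0^t(s(t-s))^{-1/2}ds=\pi$, and absorption of the loose constants into $T_0$ to close the induction with the factor $2^{-m}$. Where you genuinely diverge is in how the hypothesis \eqref{geom_of_S2} is deployed to control the surface integral $\int_{\partial D}e^{-|z-w|^2 t/(C_2 s(t-s))}\,\sigma(dz)$ for a general boundary. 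The paper first carries out the computation explicitly when $\partial D$ is a hyperplane, and then uses a layer-cake representation: the sublevel sets of the exponent are balls $B_n(P,R)$, so \eqref{geom_of_S2} bounds $\cH^{n-1}(\partial D\cap B_n(P,R))$ by $C_D\,\mathrm{Vol}_{n-1}(B_{n-1}(P,R))$, which is exactly the half-space value, yielding the general bound as $C_D$ times the model one. You instead estimate the surface integral directly by summing over dyadic annuli $B(w,(k+1)r)\setminus B(w,kr)$, using \eqref{geom_of_S2} on each ball and the Gaussian decay $e^{-k^2/C_2}$ to obtain $C_D'r^{n-1}$ uniformly in the center $w$. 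The two devices are equivalent in strength (both rest entirely on the Ahlfors-type upper regularity of $\partial D$ at all scales and all centers in $\R^n$, which is what makes unbounded boundaries such as sectors admissible), but yours is somewhat more self-contained, since it dispenses with the separate half-space computation and the comparison step; the paper's version has the mild advantage of exhibiting the general case as exactly $C_D$ times an explicitly computed model integral. One small point of care, which you handle correctly: the annulus sum must be shown to be $O(r^{n-1})$ with a constant independent of $s$ and $t-s$ individually, which works precisely because the Gaussian width and the ball radii scale with the same $r=\sqrt{s(t-s)/t}$.
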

\begin{proof} The proof proceeds by induction. The base case is $m=0$, which is \eqref{est_for_PC_Pomega}.

Now assume we have the result for $k=m$. Using the iterative formula, we have
\begin{equation}|k_{m+1}^D(t,x,y)|\leq ||c||_{\infty}\int_0^t\int_{\partial D}|H^D(s,x,z)k^D_m(t-s,z,y)|\, \sigma(dz)ds.\end{equation}
Using \eqref{est_for_PC_Pomega} and the inductive hypothesis, we see that the integrand is bounded by
\[C_1C_1 2^{-m} s^{-n/2}(t-s)^{-n/2} e^{-\frac{1}{C_2}(\frac{|x-z|^2}{s}+\frac{|z-y|^2}{t-s})}.\]

First assume that $D$ is a half-space. We do the estimate in the case $n=2$, because the general case is analogous.  Hence, we use the coordinates $x = (x_1, x_2)$, $y=(y_1, y_2)$, $z=(z_1, z_2)$, and estimate using $\{z_2 = 0 \} \subset \R^2$ for $\pa D$.  Dropping the constant factors, and saving the integral with respect to time for later, we therefore estimate 
$$\int_\R s^{-1} (t-s)^{-1} e^{- \frac{|x-z|^2}{C_2s} - \frac{|y-z|^2}{C_2(t-s)}} dz_1.$$
Without loss of generality, we shall assume that $x=(0,0)$.  Then we are estimating
$$\int_\R s^{-1} (t-s)^{-1} e^{ \frac{ -z_1 ^2 (t-s) - s |y-z|^2}{C_2 s(t-s)}} dz_1.$$
Since $z \in \pa D$, we have $z_2=0$. For the sake of simplicity, set $y_2=0$; the case where $y_2$ is nonzero is similar.  Given this assumption, we set
$$z := z_1, \quad y:=y_1,$$
and estimate 
$$\int_\R s^{-1} (t-s)^{-1} e^{ \frac{-z^2 (t-s) - s (y-z)^2}{C_2 s(t-s)}} dz.$$
We do the standard trick of completing the square in the exponent.  This gives 
$$\int_\R s^{-1} (t-s)^{-1} \exp \left[ - \left( \frac{ \sqrt{t} z - \frac{s y}{\sqrt{t}} }{ \sqrt{C_2} \sqrt{s} \sqrt{t-s}} \right)^2 - \frac{y^2}{C_2(t-s)} + \frac{sy^2}{C_2t(t-s)} \right] dz.$$
We therefore compute the integral over $\R$ in the standard way, obtaining 
$$s^{-1/2} (t-s)^{-1/2} \sqrt {\frac{C_2\pi}{t}} e^{- \frac{y^2}{C_2(t-s)} + \frac{sy^2}{C_2t(t-s)}} = s^{-1/2} (t-s)^{-1/2} \sqrt {\frac{C_2\pi}{t}} e^{ \frac{-ty^2 + sy^2}{C_2t(t-s)}}$$
$$=s^{-1/2} (t-s)^{-1/2} \sqrt {\frac{C_2\pi}{t}} e^{-\frac{y^2}{C_2t}}.$$
Finally, we compute the integral with respect to $s$, 
$$\int_0 ^t \frac{1}{\sqrt{s}} \frac{1}{\sqrt{t-s}} ds = \pi.$$
Hence, the total expression is bounded from above by 
$$\pi \sqrt {\frac{C_2 \pi}{t}} e^{-\frac{y^2}{C_2t}}.$$
Since we had assumed that $x=0$, we see that this is indeed 
$$\pi \sqrt{\frac{C_2 \pi}{t}} e^{-\frac{|x-y|^2}{C_2t}}.$$
Recalling the constant factors, we have
$$|k_{m+1}^D(t,x,y)|\leq C_1 C_1 ||c||_{\infty} 2^{-m} \pi \sqrt{\frac{C_2 \pi}{t}} e^{-\frac{|x-y|^2}{C_2t}}.$$
Now we note that the power of $t$ is $t^{-(n-1)/2}$ for dimension $n=2$.  Hence, we re-write the above estimate as 
$$|k_{m+1}^D(t,x,y)|\leq C_1 C_1 ||c||_{\infty} 2^{-m} \pi \sqrt{t} t^{-1} \sqrt{C_2 \pi} e^{-\frac{|x-y|^2}{C_2t}}.$$
We then may choose for example 
$$t\leq T_0 = \frac{ 1}  {4(C_1+1)^2 (||c||_\infty+1)^2  \pi^3 (C_2+1)}$$
$$ \implies \sqrt{t} \leq \frac{1}{2 (C_1+1) (||c||_\infty +1) \pi^{\frac 3 2} \sqrt{C_2 +1}}.$$ 
This ensures that 
$$|k_{m+1}^D(t,x,y)|\leq C_1 2^{-(m+1)} t^{-\frac n 2} e^{-\frac{|x-y|^2}{C_2t}}, \quad n=2.$$
We note that in general, for $\R^n$, by estimating analogously, noting that the integral will be over $\R^{n-1}$, we obtain 
$$|k_{m+1}^D(t,x,y)|\leq ||c||_{\infty} 2^{-m} \pi (C_2 \pi)^{n/2} t^{-(n-1)/2} e^{-\frac{|x-y|^2}{C_2t}}.$$
So, in the general-$n$ case, we let 
$$T_0 =  \frac{ 1}  {4(C_1+1)^2 (||c||_\infty+1)^2  \pi^{2+n} (C_2+1)^n}.$$ 
Then, for all $t \leq T_0$, we have 
$$|k_{m+1}^D(t,x,y)|\leq C_1 2^{-m-1} t^{-\frac n 2} e^{-\frac{|x-y|^2}{C_2t}}.$$

Now consider the case where $D$ is a general domain, not necessarily a half-space.  As before, we have
\begin{equation}\label{eq:aux1}|k_{m+1}^D(t,x,y)|\leq \frac{||c||_{\infty}C_1^2}{2^m}\int_0^t\int_{\partial D}s^{-n/2}(t-s)^{-n/2} e^{-\frac{1}{C_2}(\frac{|x-z|^2}{s}+\frac{|z-y|^2}{t-s})}\, \sigma(dz)ds.\end{equation}
We claim that the right-hand side of \eqref{eq:aux1} is less than or equal to $C_D$, the constant from \eqref{geom_of_S2}, times the corresponding integral in the case where $\partial D$ is a half-plane through $x$ and $y$. Assuming this claim, we get the same bound as for a half-plane, but with an extra $C_D$, and adjusting $T_0$ to absorb $C_D$ as well, by putting an extra $(C_D+1)^2$ in the denominator, completes the proof.

To prove this claim, we use the so-called layer cake representation: rewrite the right-hand side of \eqref{eq:aux1}, without the outside constants, as
\begin{equation}\label{layercake}\int_0^ts^{-n/2}(t-s)^{-n/2}\int_{\partial D}\int_0^{\infty}\chi_{\{f(s,t,x,y,z)<a\}}e^{-a}\, da\sigma(dz)ds,\end{equation}
where naturally
\[f(s,t,x,y,z):=\frac{1}{C_2}\left(\frac{|x-z|^2}{s}+\frac{|z-y|^2}{t-s}\right).\]
The representation \eqref{layercake} may seem odd at first but reverts to \eqref{eq:aux1} upon integration in $a$. Switching the order of integration in \eqref{layercake} (valid by Fubini-Tonelli , since everything is positive) and evaluating the $z$-integral, this becomes
\begin{equation}\label{layercake2}\int_0^ts^{-n/2}(t-s)^{-n/2}\int_0^{\infty}\cH^{n-1}(\partial D\cap \{z:f(s,t,x,y,z)<a\})e^{-a}\, dads. \end{equation}
Let us more closely examine the set $\{z:f(s,t,x,y,z)<a\}$. It is the set where
\[\left(1-\frac st \right)|x-z|^2+\frac st|z-y|^2<\frac 1tC_2 a s (t-s),\]
and hence it is a ball centered at the point $P(s,t,x,y):=(1-\frac st)x+\frac st y$, with radius squared equal to
\[R^2(s,t,x,y):=\frac 1tC_2 a s (t-s)-\frac st(1-\frac st)|y-x|^2,\]
or to zero if the right-hand side is not positive. Therefore \eqref{layercake2} equals
\begin{equation}\label{layercake3}\int_0^ts^{-n/2}(t-s)^{-n/2}\int_0^{\infty}\cH^{n-1}(\partial D\cap B_n(P,R))e^{-a}\, dads. \end{equation}
By the assumption \eqref{geom_of_S2}, this is bounded by
\begin{equation}\label{layercake3a}C_D\int_0^ts^{-n/2}(t-s)^{-n/2}\int_0^{\infty}\textrm{Vol}_{n-1}(B_{n-1}(P,R))e^{-a}\, dads. \end{equation}

However, in the event that $D$ is a half-space with $x$ and $y\in\partial D$ (so also $P\in\partial D$), we have $\partial D\cap B_n(P,R)=B_{n-1}(P,R)$, so \eqref{layercake3} equals
\begin{equation}\label{layercake4}\int_0^ts^{-n/2}(t-s)^{-n/2}\int_0^{\infty}\textrm{Vol}_{n-1}(B_{n-1}(P,R))e^{-a}\, dads. \end{equation}
Therefore, the integral \eqref{layercake3} for general $D$ is bounded by $C_D$ times the integral \eqref{layercake3} for a half-space. Since \eqref{layercake3} is equal to the right-hand side of \eqref{eq:aux1} without the preceding constants, the claim is proven. This completes the proof of Lemma \ref{lem:offdiagfix}.  
\end{proof}

\begin{remark}
The key is that the integral is half an order better in $t$ than the true heat kernel, which is a critical feature of the difference between Robin and Neumann heat kernels. It allows us to utilize the extra $\sqrt t$ to obtain the additional factor of $2^{-m}$ which is required for the induction step in the next proposition.
\end{remark}

Now, we establish the main estimate to prove Theorem \ref{loc_pr_for_R}. Let
\[G(t)=\max \left\{F(t), \, 2C_1t^{-(n/2)}e^{-\frac{(\alpha/2)^2}{C_2 t}}\right\}.\]
We note that of course we still have $G(t)=O(t^{\infty})$.

	\begin{proposition}\label{p: estimate_for_kn_min_kn}
		There exists $T>0$ such that the estimate
		\begin{equation} \label{estimate_for_kn_min_kn}
		|k_m^S(t,x,y)-k_m^{\Omega}(t,x,y)|\leq G(t)\cdot 7\cdot 2^{-m}
		\end{equation}
		holds for all $(t,x,y)\in (0,T]\times W\times\overline{\Omega}_0$. 
	\end{proposition}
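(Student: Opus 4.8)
The plan is to prove \eqref{estimate_for_kn_min_kn} by induction on $m$, carrying the induction not on $(0,T]\times W\times\overline{\Omega}_0$ but on the slightly larger set $(0,T]\times\overline W\times\overline{\Omega}_0$. This costs nothing: each $k_m^S-k_m^{\Omega}$ extends continuously up to the boundary (being a finite iterate of Neumann heat kernels against the bounded measure $|c(z)|\,\sigma(dz)$, all continuous up to the boundary), so an estimate valid on $W$ upgrades automatically to $\overline W$. We genuinely need this, because the part of the common boundary $\Gamma:=\partial S\cap\partial\Omega$ lying near $\Omega_0$ meets $\overline W$: if $z\in\Gamma$ and $d(z,\Omega_0)<\alpha/2$ then $z$ is a limit of points of $W$, hence $z\in\overline W$. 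First I would fix $A\le 1/4$ and then shrink $T$ so that simultaneously $A(t,x)<A$ on $(0,T]\times W$ as in \eqref{eq:defofA}; $T\le T_0$, so Lemma \ref{lem:offdiagfix} applies; and $t\mapsto t^{-n/2}e^{-(\alpha/2)^2/(C_2t)}$ is increasing on $(0,T]$, so that $G$ is nondecreasing there. The base case $m=0$ is immediate: $k_0^S-k_0^{\Omega}=H^S-H^{\Omega}$, and for $x\in\overline W$, $y\in\overline{\Omega}_0\subseteq\overline{S\cap\Omega}$ one has $|H^S-H^{\Omega}|\le F_1(t)\le G(t)\le 7\,G(t)$.

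For the inductive step, assume \eqref{estimate_for_kn_min_kn} at level $m$ on $(0,T]\times\overline W\times\overline{\Omega}_0$. Using the defining recursion for $k_{m+1}^D$ and decomposing $\partial S=\Gamma\cup(\partial S\setminus\Gamma)$ and $\partial\Omega=\Gamma\cup(\partial\Omega\setminus\Gamma)$, I would write $k_{m+1}^S(t,x,y)-k_{m+1}^{\Omega}(t,x,y)$ as a sum of four terms: on $\Gamma$, an ``$(H^S-H^{\Omega})\,k_m^S$'' term (I) and an ``$H^{\Omega}(k_m^S-k_m^{\Omega})$'' term (II); plus the two off-$\Gamma$ integrals (III) over $\partial S\setminus\Gamma$ and (IV) over $\partial\Omega\setminus\Gamma$. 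Terms (I), (III), (IV) are treated identically: bound the heat-kernel difference, respectively the heat kernel, uniformly by $F_1(t)$, $F_2(t)$, $F_3(t)$ — each $\le G(t)$ — using that $\Gamma\subseteq\overline{S\cap\Omega}$, that $\partial S\setminus\Gamma\subseteq\overline{S\setminus\Omega}$, and that $\partial\Omega\setminus\Gamma=\partial\Omega\setminus\partial S$ is precisely the set in the definition of $F_3$; then pull that constant out, enlarge the remaining boundary integral to all of $\partial S$ (resp.\ $\partial\Omega$), substitute $s\mapsto t-s$ in time, and invoke the reversed-argument form of Proposition \ref{prop:estforkn} at the point $y\in\overline{\Omega}_0\subseteq\overline W$, yielding $\le G(t)\,2^{m+1}A^{m+1}$ for each of the three. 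For (II) I would pull out $\sup_{z\in\Gamma,\,0<s<t}|k_m^S(t-s,z,y)-k_m^{\Omega}(t-s,z,y)|$ and bound the remaining $\int_0^t\int_{\partial\Omega}|H^{\Omega}(s,x,z)c(z)|\,\sigma(dz)\,ds$, which is exactly $A_2(t,x)\le A(t,x)<A$.

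The heart of the argument is therefore the uniform estimate, valid for all $z\in\Gamma$, $y\in\overline{\Omega}_0$, $0<s<t$,
\[
|k_m^S(t-s,z,y)-k_m^{\Omega}(t-s,z,y)|\le 7\cdot 2^{-m}\,G(t),
\]
which I would prove by splitting $\Gamma$ according to $d(z,\Omega_0)$. If $d(z,\Omega_0)<\alpha/2$, then $z\in\overline W$ and the inductive hypothesis gives the bound with $G(t-s)\le G(t)$. If $d(z,\Omega_0)\ge\alpha/2$, then $|z-y|\ge\alpha/2$ for $y\in\overline{\Omega}_0$, so Lemma \ref{lem:offdiagfix} gives $|k_m^S|+|k_m^{\Omega}|\le\tfrac{2C_1}{2^m}(t-s)^{-n/2}e^{-(\alpha/2)^2/(C_2(t-s))}\le 2^{-m}G(t-s)\le 7\cdot 2^{-m}G(t)$, directly from the definition of $G$. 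Combining the four term-bounds gives $|k_{m+1}^S-k_{m+1}^{\Omega}|\le\bigl(3\cdot 2^{m+1}A^{m+1}+7A\cdot 2^{-m}\bigr)G(t)$; since $A\le 1/4$ forces $2^{m+1}A^{m+1}=(2A)^{m+1}\le 2^{-(m+1)}$ and $7A\cdot 2^{-m}\le\tfrac{7}{2}\cdot 2^{-(m+1)}$, the right side is at most $\bigl(3+\tfrac{7}{2}\bigr)2^{-(m+1)}G(t)\le 7\cdot 2^{-(m+1)}G(t)$, which closes the induction (and shows the constant $7$ has a little room to spare). Restricting to $W\subseteq\overline W$ yields exactly the stated proposition.

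I expect the main obstacle to be organizing the contribution of the common boundary $\Gamma$: one must recognize that the inductive hypothesis has to be propagated all the way up to $\overline W$ so that it is available for $z\in\Gamma$ near $\Omega_0$, while on the part of $\Gamma$ far from $\Omega_0$ — where no comparison estimate of type $F_1$ is available, those being suprema over $W$ only — it is exactly the auxiliary term $2C_1t^{-n/2}e^{-(\alpha/2)^2/(C_2t)}$ deliberately built into $G$ that saves the day, via the off-diagonal bound of Lemma \ref{lem:offdiagfix}. A secondary, purely technical nuisance is verifying the handful of ``continuity up to the boundary'' statements needed to pass from the open sets in the definitions of the $F_j$, of $A(t,x)$, and of Proposition \ref{prop:estforkn} to their closures.
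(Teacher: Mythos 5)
Your proposal is correct and follows essentially the same route as the paper's proof: the same decomposition of $k_{m+1}^S-k_{m+1}^{\Omega}$ into the common-boundary difference term plus the two off-$\Gamma$ terms, the same use of $F(t)$ and Proposition \ref{prop:estforkn} to handle three of them, and the same splitting of $\partial S\cap\partial\Omega$ by distance to $\Omega_0$ so that the inductive hypothesis covers the near part while Lemma \ref{lem:offdiagfix} and the $2C_1t^{-n/2}e^{-(\alpha/2)^2/(C_2t)}$ term built into $G$ cover the far part. Your only departures are cosmetic refinements --- carrying the induction on $\overline{W}$ to make the boundary case of the inductive hypothesis literally applicable, and taking a maximum rather than a sum over the two pieces of $\Gamma$, which yields the slightly better constant $13/2$ in place of $7$.
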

	
	\begin{proof} We choose $T$ small enough so that $T<T_0$ in Proposition \ref{p: estimate_for_kn_min_kn} and so that \eqref{eq:defofA} holds with $A=1/4$. 

Now proceed by induction. The base case is instantaneous by definition of $k_0$ and of $F(t)$, using the known Neumann locality. So assume that \eqref{estimate_for_kn_min_kn} holds for $k=m$; we will prove it for $k=m+1$. Using some algebraic manipulations,
		\begin{equation*}
		\begin{gathered}
		I:=|k_{m+1} ^S(t,x,y)-k_{m+1}^{\Omega}(t,x,y)|\leq I_1+I_2+I_3\\
		:=
		+\int_{0}^{t}\int_{\partial S\cap\partial\Omega}\left|H^S(s,x,z)k_m^S(t-s,z,y)-H^{\Omega}(s,x,z)k_m^{\Omega}(t-s,z,y)\right||c(z)|\sigma(dz)ds\\
		+ \int_{0}^{t}\int_{\partial S\setminus\partial\Omega}|H^S(s,x,z)k_m^S(t-s,z,y)c(z)|\sigma(dz)ds\\
		+ \int_{0}^{t}\int_{\partial\Omega\setminus\partial S}|H^{\Omega}(s,x,z)k_m^{\Omega}(t-s,z,y)c(z)|\sigma(dz)ds\\
		\end{gathered}
		\end{equation*}
		We estimate these terms separately, beginning with $I_1$.
		\begin{equation*}
		\begin{gathered}
		I_1\leq \int_{0}^{t}\int_{\partial S\cap\partial\Omega}\left|H^S(s,x,z)-H^{\Omega}(s,x,z)\right||k_m^S(t-s,z,y)||c(z)|\sigma(dz)ds\\
		+\int_{0}^{t}\int_{W\cap \partial S\cap\partial\Omega }\left| k_m^S(t-s,z,y)-k_m^{\Omega}(t-s,z,y)\right|\left|H^{\Omega}(s,x,z)\right||c(z)|\sigma(dz)ds\\
		+\int_{0}^{t}\int_{(\Omega\setminus W)\cap \partial S\cap\partial\Omega}\left| k_m^S(t-s,z,y)-k_m^{\Omega}(t-s,z,y)\right|\left|H^{\Omega}(s,x,z)\right||c(z)|\sigma(dz)ds.	\end{gathered}
		\end{equation*}
The first term in the first integral is bounded by $F(t)$, since $x\in W$ and $z\in\overline{\Omega}$, so we may pull it out. We estimate the other term with Proposition \ref{prop:estforkn} and get a bound of $F(t)\cdot 2^{m+1}A^{m+1}=F(t)\cdot 2^{-(m+1)}$ for the first integral.

For the second integral, we pull out the supremum of the first term using the inductive hypothesis. We estimate the other term using the definition of $A$ and we get a bound of $G(t)\cdot 7\cdot 2^{-m-2}$.

For the third integral, we use Lemma \ref{lem:offdiagfix} to pull out the first term, ignoring the difference and just estimating both $k$ terms separately. Since $|z-y|\geq\alpha/2$ on this region, the supremum is less than $2^{-m}G(t)$ by Proposition \ref{p: estimate_for_kn_min_kn}. We estimate the other term using the definition of $A$ and we get $1/4$, giving a bound of $2^{-m-2}G(t)$. Overall, we have
\[I_1\leq G(t)(2^{-m-1}+7\cdot 2^{-m-2}+2^{-m-2}).\]

		Next we estimate the terms $I_2$, $I_3$. In each, we pull out the supremum of $H^S(s,x,z)$ over the relevant region, and observe that it is bounded above by $F(t)$. For the term remaining in the integral we use Proposition \ref{prop:estforkn}. Since $F(t)\leq G(t)$, we obtain a bound of $G(t)\cdot 2^{-m-1}$ for each of these two terms. Putting it all together, we see
\[I\leq G(t)(3\cdot 2^{-m-1}+7\cdot 2^{-m-2}+2^{-m-2})=G(t)\cdot 2^{-m-1}\left(3+\frac 72+\frac 12\right)=G(t)\cdot 7\cdot 2^{-m-1},\]
as desired.
	\qed 
	\end{proof}

Finally, we prove Theorem \ref{loc_pr_for_R}.  By Proposition \ref{estimate_for_kn_min_kn},
	\begin{multline*}
	|K^S(t,x,y)-K^{\Omega}(t,x,y)|\leq \sum_{m=0}^{\infty}|k _ m ^S(t,x,y)-k _ m ^{\Omega}(t,x,y)|\\
\leq \sum_{m=0}^{\infty}7G(t)2^{-m}=14\cdot G(t),
	\end{multline*}
	which is $O(t^{\infty})$ as $t\rightarrow 0$. \qed 
\end{proof}

\section{Hearing the corners of a drum}  \label{hearing} 
As a consequence of the work in the previous section, the locality principle holds for Robin conditions $\Omega$ is a bounded polygonal domain in $\mathbb R^2$, and $S$ is a whole space, half-space, or circular sector respectively.  Therefore, to compute the heat trace expansion for a polygonal domain $\Omega \subset \R^2$, it suffices to chop the domain into pieces and, depending on the piece, replace the true  heat kernel with one of the following:
\begin{itemize}
\item the heat kernel for $\R^2$ away from the boundary of $\Omega$, 
\item the heat kernel for a half plane (with the same boundary condition taken on the boundary as taken on $\pa \Omega$) near $\pa \Omega$ but away from the corners, 
\item the heat kernel for an infinite circular sector with opening angle equal to that at a corner (with the same boundary condition taken on the boundary as taken on $\pa \Omega$), near a corner of $\Omega$.
\end{itemize}

Henceforth we use the classical Robin boundary condition as in \eqref{rbc1}, so that in \eqref{Schredinger_eq}, $c(x)$ is a constant.

The heat kernel for $\R^2$ is given by \eqref{hkrn} with $n=2$, 
$$H_{\R ^2} (t,z, z') = \frac{1}{4\pi t}e^{-\frac{|z-z'|^2}{4t}}.$$
The heat kernels for the half plane, 
$$\R^2 _+ := \{ (x,y) \in \R^2 \mid y \geq 0 \}$$
with the Dirichlet and Neumann boundary conditions at $\{y=0\}$ are given by the method of images.  For the Neumann condition, 
$$H_{\mathbb R_{+}^2,\textrm{Neumann}} = H_{\mathbb R^2}(t,x,y,x',y') + H_{\mathbb R^2}(t,x,y,x',-y'), \quad z=(x,y), \quad z'= (x', y'),
$$
whereas for the Dirichlet condition, 
$$H_{\mathbb R_{+}^2,\textrm{Dirichlet}} = H_{\mathbb R^2}(t,x,y,x',y') - H_{\mathbb R^2}(t,x,y,x',-y'), \quad z=(x,y), \quad z'= (x', y').
$$
The Robin heat kernel, 
\[H_{\mathbb R_+^2,\textrm{Robin}}:=H_{\mathbb R_{+}^2,\textrm{Neumann}}+H_{corr},\]
where
$$H_{corr}(t,x,y,x',y'):= - \frac{\alpha}{\beta \sqrt{4\pi t}} e^{-\frac{(x-x')^2}{4t}} e^{\frac{ \alpha(y+y')}{\beta}} e^{\frac{\alpha^2 t}{\beta^2}} \erfc \left( \frac{y+y'}{\sqrt{4t}} + \frac \alpha \beta \sqrt t \right).$$
Above, $\alpha$ and $\beta$ are given in \eqref{rbc1}.  Recall that the complementary error function is smooth in $z$, bounded by $1$ for $z\geq 0$, and decaying to infinite order as $z\to\infty$:
$$\erfc(z) = 1 - \erf(z) = \frac{2}{\sqrt \pi} \int_z ^\infty e^{-s^2}ds.$$
The heat kernels for an infinite sector are more complicated. Nevertheless, they can be computed via the method of Green's functions.

In forthcoming work, we compute the Green's function for a circular sector of opening angle $\gamma$ for the Laplacian with Dirichlet, Neumann, and Robin boundary conditions.  Here, we simply present the result of the calculations.  For the \textbf{Robin} boundary condition, 
\begin{equation} \label{RobinGK} 
  G_R(s,r,\phi,r_0,\phi_0)=\frac{1}{\pi^2}\int_{0}^{\infty}K_{i\mu}(r \sqrt s)K_{i\mu}(r_0 \sqrt s)
\end{equation}

\begin{equation*}
  \times\biggl\{ \cosh(\pi-|\phi_0-\phi|)\mu+\frac{\sinh\pi\mu}{\sinh\gamma\mu}\cosh(\phi+\phi_0-\gamma)\mu+\frac{\sinh(\pi-\gamma)\mu}{\sinh\gamma\mu}\cosh(\phi-\phi_0)\mu  
\end{equation*}

\begin{equation*}
  -\frac{\sinh\pi\mu}{\sinh\gamma\mu}
  \left(e^{(\phi+\phi_0-\gamma)\mu}\frac{\alpha}{\alpha+\beta\mu}+e^{-(\phi+\phi_0-\gamma)\mu}\frac{\alpha}{\alpha-\beta\mu}\right)\biggl\} d\mu.
\end{equation*}

The \textbf{Dirichlet} boundary condition corresponds to the Robin boundary condition with $\beta=0$.  In this case, the Green's function, 

\begin{equation} \label{DirichletGK}
  G_D(s,r,\phi,r_0,\phi_0)=\frac{1}{\pi^2}\int_{0}^{\infty}K_{i\mu}(r \sqrt s)K_{i\mu}(r_0 \sqrt s)
\end{equation}

\begin{equation*}
  \times\biggl\{ \cosh(\pi-|\phi_0-\phi|)\mu-\frac{\sinh\pi\mu}{\sinh\gamma\mu}\cosh(\phi+\phi_0-\gamma)\mu+\frac{\sinh(\pi-\gamma)\mu}{\sinh\gamma\mu}\cosh(\phi-\phi_0) \mu \biggl\}d\mu.
\end{equation*}

The \textbf{Neumann} boundary condition corresponds to the Robin boundary condition with $\alpha=0$.  In this case, the Green's function, 

\begin{equation} \label{NeumannGK} 
  G_N(s,r,\phi,r_0,\phi_0)=\frac{1}{\pi^2}\int_{0}^{\infty}K_{i\mu}(r \sqrt s)K_{i\mu}(r_0 \sqrt s)
\end{equation}

\begin{equation*}
  \times\biggl\{ \cosh(\pi-|\phi_0-\phi|)\mu+\frac{\sinh\pi\mu}{\sinh\gamma\mu}\cosh(\phi+\phi_0-\gamma)\mu+\frac{\sinh(\pi-\gamma)\mu}{\sinh\gamma\mu}\cosh(\phi-\phi_0) \mu  \biggl\}d\mu.
\end{equation*}

The details in the derivation of these formulas shall be presented in our forthcoming work.  The idea is to look for the Green's function, $G$, of the following form
\begin{equation}\label{G_est}
  G(s,r,\phi,r_0,\phi_0)=\frac{2}{\pi^2}\int_{0}^{\infty}K_{i\mu}(r\sqrt{s})K_{i\mu}(r_0\sqrt{s})\mu\sinh(\pi\mu)\Phi(\mu,\phi,\phi_0)d\mu.
\end{equation}
Above, $K_{i \mu}$ is the modified Bessel function of the second kind, and $s$ is the spectral parameter of the resolvent, $(\Delta + s)^{-1}$.  

In that work, we also use functional calculus techniques to rigorously justify the statement, 
$$H(t, r, \phi, r_0, \phi_0) = \mathcal{L}^{-1} \left( G(s, r, \phi, r_0, \phi_0) \right) (t).$$
Above, $H$ denotes the heat kernel, and $\mathcal L^{-1}$ denotes the inverse Laplace transform taken with respect to the spectral parameter, $s$.  This allows us to pass from the Green's functions to the heat kernels on a sector, and we may then compute the short time asymptotic expansions of the heat traces for all three boundary conditions.  We do this by explicitly calculating: 
\begin{enumerate} 
\item the integral of $H_{\R^2} (t,z,z)$ in the interior 
\item the integral of $H_{\R^2 _+} (t,z,z)$ near the edges but away from the corners, with the appropriate boundary condition,
\item the integral of $H_{S_\alpha} (t,z,z)$, where $H_{S_\alpha}$ is the heat kernel for an infinite circular sector of opening angle $\alpha$ with the same boundary condition as that taken on the polygon, near a corner of interior angle $\alpha$.
\end{enumerate}
Albeit rather lengthy and technical, those calculations may be of independent interest and shall be presented in our forthcoming work.  

In this way, we compute that for a polygonal domain $\Omega \subset \R^2$, with $n$ vertices, having interior angles $\theta_1,\dots,\theta_n$, the heat trace asymptotics for small times are respectively:  
\begin{enumerate}
\item[(D)] for the Dirichlet boundary condition, 
$$\tr e^{-t \Delta} \sim \frac{|\Omega|}{4 \pi t} - \frac{|\pa \Omega|}{8 \sqrt{\pi t}} + \frac{\chi(\Omega)}{6} - \frac n {12} + \sum_{k=1} ^n \frac{\pi^2 + \theta_k^2}{24\pi \theta_k} + O(\sqrt t ),$$
\item[(N)] for the Neumann boundary condition, 
$$\tr e^{-t \Delta} \sim \frac{|\Omega|}{4 \pi t} + \frac{|\pa \Omega|}{8 \sqrt{\pi t}} + \frac{\chi(\Omega)}{6} - \frac n {12} + \sum_{k=1} ^n \frac{\pi^2 + \theta_k^2}{24\pi \theta_k}+ O(\sqrt t ),$$
\item[(R)] for the Robin boundary condition, 
$$\tr e^{-t \Delta} \sim \frac{|\Omega|}{4 \pi t} + \frac{|\pa \Omega|}{8 \sqrt{\pi t}} + \frac{\chi(\Omega)}{6} - \frac{|\pa \Omega| \alpha}{2\pi \beta} - \frac n {12} + \sum_{k=1} ^n \frac{\pi^2 + \theta_k^2}{24\pi \theta_k}+ O(\sqrt t ).$$
\end{enumerate} 

Now let $\tilde \Omega$ be a smoothly bounded domain in the plane.  The heat trace expansions have been computed by \cite{ms} for the Dirichlet and Neumann boundary condition and \cite{zayed} for the Robin condition.  
These are, respectively,   
\begin{enumerate}
\item[(D)] for the Dirichlet boundary condition, 
$$\tr e^{-t \Delta} \sim \frac{|\tilde \Omega|}{4 \pi t} - \frac{|\pa \tilde \Omega|}{8 \sqrt{\pi t}} + \frac{\chi(\tilde \Omega)}{6} + O(\sqrt t ),$$
\item[(N)] for the Neumann boundary condition, 
$$\tr e^{-t \Delta} \sim \frac{|\tilde \Omega|}{4 \pi t} + \frac{|\pa \tilde \Omega|}{8 \sqrt{\pi t}} + \frac{\chi(\tilde \Omega)}{6} +  O(\sqrt t ),$$
\item[(R)] for R boundary condition, 
$$\tr e^{-t \Delta} \sim \frac{|\tilde \Omega|}{4 \pi t} + \frac{|\pa \tilde \Omega|}{8 \sqrt{\pi t}} + \frac{\chi(\tilde \Omega)}{6} - \frac{|\pa \tilde \Omega| \alpha}{2\pi \beta} + O(\sqrt t ).$$
\end{enumerate}
The above expressions coincide with the corresponding expressions for polygonal domains in the case $n=0$. Now, one may compare the constant terms in each of these expansions to the constant terms with the corresponding boundary condition in case $\Omega$ is a polygonal domain.  In so doing, it is a mere exercise in multivariable calculus to prove that these will never be equal for a simply connected polygonal domain, $\Omega$, and for any smoothly bounded domain $\tilde \Omega$.  This argument may be found in \cite{corners}.

\section{Microlocal analysis in the curvilinear case}  \label{microloc}

It turns out that the heat trace expansions above are also valid for curvilinear polygons, once terms accounting for the curvature of the boundary away from the corners have been added.  Although this has been demonstrated in \cite{corners} for the Dirichlet boundary condition using monotonicity, it becomes a much more subtle matter for the Neumann and Robin boundary conditions.  

The main problem is that for curvilinear polygons, we no longer have an exact geometric match.  Hence, we can no longer use the locality principle to compute the heat trace expansion.  For classical polygons, one may compute the Neumann heat trace using the Dirichlet heat trace together with the trace of a Euclidean surface with conical singularities created by doubling the polygon.  However, this technique fails once the edges of the polygon are no longer necessarily straight. Therefore, in order to compute the short time asymptotic expansion of the heat trace without exact geometric matches, we turn to the robust techniques of geometric microlocal analysis. This allows us to give a full description of the Dirichlet, Neumann, and Robin heat kernels on a curvilinear polygon in all asymptotic regimes. Restricting to the diagonal and integrating yields the heat trace.

In order to describe the heat kernel in all asymptotic regimes, we build a space, called the \emph{heat space} or \emph{double heat space}, on which the heat kernel is well-behaved. This space is built by blowing up various p-submanifolds of $\Omega \times \Omega \times [0, \infty)$. To see why this is needed, first consider the heat kernel \eqref{hkrn} on $\R^n$.  At the diagonal in $\R^n \times \R^n \times [0, \infty)$, the heat kernel behaves as $O(t^{-n/2})$ as $t \downarrow 0$.  However, as long as $d(z, z') \geq \eps > 0$, the heat kernel behaves as $O(t^\infty)$ as $t \downarrow 0$. So the heat kernel fails to be well-behaved at $\{z=z',t=0\}$.  This is the motivation for ``blowing up'' the diagonal $\{z = z'\}$ at $t=0$, which means replacing this diagonal with its inward pointing spherical normal bundle, corresponding to the introduction of ``polar coordinates". The precise meaning of ``blowing up'' is explained in \cite{tapsit}, and in this particular case of blowing up $\{ z = z' \}$ at $t=0$ in $\R^n \times \R^n \times [0, \infty)$, see \cite[Chapter 7]{tapsit}.

For the case of a curvilinear polygonal domain $\Omega \subset \R^2$,  we begin with $\Omega \times \Omega \times [0, \infty)$ and perform a sequence of blow-ups. Our construction is inspired by the construction of the heat kernel on manifolds with wedge singularities performed by Mazzeo and Vertman in \cite{mave}. We leave the details to our forthcoming work.

Once the double heat space has been constructed, the heat kernel may be built in a similar spirit to the Duhamel's principle construction of the Robin heat kernel in the proof of Theorem \ref{loc_pr_for_R}. We start with a parametrix, or initial guess, and then use Duhamel's principle to iterate away the error. This requires the proof of a composition result for operators whose kernels are well-behaved on our double heat space, and that in turn requires some fairly involved technical machinery (a proof ``by hand" without using this machinery would be entirely unreadable). However, it works out and gives us a very precise description of the heat kernel on a curvilinear polygon, with any combination of Dirichlet, Neumann, and Robin conditions.

The details of this sort of geometric microlocal analysis construction are intricate, but its utility is undeniable.  In settings such as this, where exact geometric matches are lacking, but instead, one has \em asymptotic geometric matches, \em these microlocal techniques may be helpful.  For the full story in the case of curvilinear polygons and their heat kernels, please stay tuned for our forthcoming work, in which we shall use the microlocal construction described above to prove Theorem 1.  


%
\begin{acknowledgement}
The authors extend their sincere gratitude to the organizers of the Matrix workshop, ``Elliptic Partial Differential Equations of Second Order: Celebrating 40 Years of Gilbarg and Trudinger's Book."  Many thanks to Lucio Boccardo, Florica-Corina C\^irstea, Julie Clutterbuck, L. Craig Evans, Enrico Valdinoci, Paul Bryan, and of course, Neil Trudinger!   The second author is also grateful to: Lashi Bandara, Feng-Yu Wang, Alexander Strohmaier, Liangpan Li and, Grigori Rozenblum for helpful correspondences. The first author was partially  supported by MON RK   AP05132071.
\end{acknowledgement}
%

%
%

\end{document}